\documentclass[preprint,12pt]{elsarticle}

\makeatletter
\def\ps@pprintTitle{%
 \let\@oddhead\@empty
 \let\@evenhead\@empty
 \def\@oddfoot{}%
 \let\@evenfoot\@oddfoot}
\makeatother

\usepackage{amssymb}
\usepackage{amsmath}
\usepackage{amsthm}
\usepackage[section]{algorithm}
\usepackage{algorithmic}
\usepackage{multirow,multicol}
\usepackage{booktabs}
\usepackage{hyperref}
\usepackage{xcolor}
\usepackage{url,natbib}
\newdefinition{definition}{Definition}
\newdefinition{remark}{Remark}
\newtheorem{theorem}{Theorem}
\newtheorem{lemma}[theorem]{Lemma}
\newtheorem{proposition}[theorem]{Proposition}
\newtheorem{corollary}[theorem]{Corollary}
\newdefinition{example}{Example}
\DeclareMathOperator{\diag}{diag}

\newcommand{\R}{{\mathbb R}}

\newcommand{\C}{{\mathbb C}}
\newcommand{\Cn}{{ \mathbb{C}^n}}
\newcommand{\Cnn}{{ \mathbb{C}^{n\times n}}}

\DeclareMathOperator{\Span}{span}
\DeclareMathOperator{\rank}{rank}
\newcommand{\In}{{ \mathbf{In}}}
\newcommand{\AB}{{(A,B)}}

\begin{document}

\begin{frontmatter}



\title{An indefinite LOBPCG type of algorithm for detecting a definite {H}ermitian matrix pair}


\author{Marija Miloloža Pandur} 
 \ead{mmiloloz@mathos.hr}
 \ead[url]{https://www.mathos.unios.hr/moj_profil/marija-miloloza-pandur/}

\affiliation{organization={School  of Applied Mathematics and Informatics,  Josip Juraj Strossmayer University of Osijek},
            addressline={Trg Ljudevita Gaja 6}, 
            city={ Osijek},
            postcode={31000}, 
            country={Croatia },
            }
\fntext[label3]{ Linear Algebra and its Applications, (2026), v.~746, pp.~111--139, \url{doi 10.1016/j.laa.2026.05.020}}
\fntext[label4]{© 2026. This manuscript version is made available under the CC-BY-NC-ND 4.0 license \url{https://creativecommons.org/licenses/by-nc-nd/4.0/}}

\begin{abstract}
A Hermitian matrix pair $(A,B)$ is called definite if some real linear combination of the matrices $A$ and $B$ is a positive definite matrix. Detection of the definiteness is not straightforward.  We propose a basic subspace algorithm for detecting a large definite matrix pair $(A,B)$ with indefinite $B$. 
 The proposed subspace algorithm is based on iterative testing of small projected Hermitian matrix pairs formed by using  subspaces of small dimensions. Furthermore, we propose a specialized algorithm with parameter $m$, and its preconditioned variant. In the specialized algorithm with $m=3$ we choose the subspaces like in the indefinite locally optimal block preconditioned conjugate gradient (LOBPCG) method.
 Numerical experiments  demonstrate the efficiency of our specialized algorithm, applied on medium-sized pairs, as well as,  on large and banded pairs. Our algorithm very quickly detects (in)definiteness; much faster than some other algorithms.
\end{abstract}


\begin{highlights}
\item New subspace algorithm for detecting definite matrix pair (pencil)
\item Algorithm based on iterative testing of small projected Hermitian matrix pairs
\item Like the indefinite locally optimal block preconditioned conjugate gradient method
\item Quick detection of definiteness of medium-sized or large and sparse pairs
\end{highlights}

\begin{keyword}
definite matrix pair \sep pencil \sep definiteness interval \sep definitizing shift \sep subspace algorithm \sep preconditioning \sep LOBPCG

 \MSC 15A18 \sep 15A22 \sep 65F15

\end{keyword}

\end{frontmatter}



\section{Introduction}\label{sec:intro}
A pair of Hermitian matrices $A,B\in\Cnn$ is called definite  if there exist $\alpha,\beta\in\R$ such that $\alpha A+\beta B$ is positive definite.
 If $(A,B)$ is a definite pair, then the eigenpairs of the generalized eigenvalue problem (GEP) $Ax=\lambda Bx$ can be computed by methods that exploit the definiteness~\cite{Davies2001,MATEJAS2022127358}. 
 Algorithms that solve the partial GEP  of a large definite matrix pair $(A,B)$ with an indefinite $B$ are proposed in~\cite{KressnerMPandurShao2013,MPandur2019}.
 Three other application areas concern setting up a conjugate gradient iteration for  saddle point  linear systems with symmetric indefinite coefficient matrices~\cite{Liesen2008}, checking the hyperbolicity of a Hermitian quadratic matrix polynomial by detecting the definiteness of the associated matrix pair~\cite{htvd2002}, and finally the design of  algorithms for  quadratically constrained quadratic programming~\cite{Adachi2019,Jiang2019,Jiang2018,Taati2019}.

We aim to determine whether a given pair of Hermitian matrices is de\-fi\-nite. There are many algorithms for detecting definiteness, such as the algorithms proposed in~\cite{CHENG199963,Crawford86,CrawfordMoon83,Tisseur2010,htvd2002,Keller1994,Mengi2014,More1993,Veselic1993}, which are suitable for  small- to medium-sized matrix pairs. The algorithms from~\cite{Mengi2018,Mengi2019nons,KressnerLuVan2018,Ding2018} are designed to compute the Crawford number $\gamma$ of a large Hermitian matrix pair (if $\gamma>0$, then the pair is definite). 
 Modifications to  the
algorithms from~\cite{KressnerLuVan2018} that enable faster detection of (in)definiteness  are presented in~\cite{PandurKuzmano}.

In this paper, we propose  new subspace algorithms for detecting definiteness, suitable for medium-sized  or  large and sparse matrix pairs. The proposed subspace algorithms iteratively project a Hermitian matrix pair $(A,B)$ to small dimensional subspaces of $\C^n$ and detect the (in)definiteness of the resulting projected pair. 
 In particular, our algorithms make use of the fact that if the projected pair $(U^HAU, U^H BU)$ is not definite, then the original pair is also not definite. On the other hand, the decision that $(A,B)$  is definite is made by finding $\nu\in\R$ such that  the matrix $A-\nu B$ is either positive definite or  negative definite. In the new Algorithm~\ref{alg:det def} candidates for such $\nu$ are formed from the Ritz values, i.e., from the eigenvalues of the projected pair. Algorithm~\ref{alg:det def} is actually an eigensolver for computing a few inner  eigenvalues and the corresponding eigenvectors, and terminates as soon as the (in)definiteness is confirmed, within a finite number of iterations. The output of Algorithm~\ref{alg:det def}, computed for a definite Hermitian matrix pair, may be used as  input for the algorithms in~\cite{KressnerMPandurShao2013,MPandur2019}. 

 We use the following \textbf{notation}:  $X^H$ is the conjugate transpose of a vector or matrix. $A\succ 0$ ($\succeq 0$) means that $A$ is a Hermitian positive definite (positive semidefinite) matrix; $A\prec 0$ ($\preceq 0$) means that $A$  is a Hermitian negative definite (negative semidefinite) matrix.  $I_n$ ($0_n$) is an identity (zero) matrix of order $n$.  $\diag(X,Y)$ denotes the block diagonal matrix with the first block $X$ and the second block $Y$.   A  diagonal matrix with diagonal elements $d_{1},\ldots,d_{n}$ is denoted by $\diag(d_{1},\ldots,d_{n})$.   The smallest (the largest) real eigenvalue of a Hermitian matrix or a Hermitian matrix pair is denoted by $\lambda_{\min}(\cdot)$ ($\lambda_{\max}(\cdot)$). The integer triplet $\In(B)=(n_+,n_-,n_0)$ is the inertia of a Hermitian matrix $B$, which means that $B$ has $n_+$ positive, $n_-$ negative, and $n_0$ zero eigenvalues, respectively. $\|A\|_2$ is the spectral norm of the matrix~$A$.


 The paper is organized as follows. In section~\ref{sec:prelim} we recall some known results on definite pairs.  A  basic subspace algorithm for detecting the definiteness 
of a  Hermitian matrix pair is derived at the beginning of section~\ref{sec:det def subspaces}. A specialized subspace algorithm, Algorithm~\ref{alg:det def},       is derived and analyzed in section~\ref{subsec: LOBPCG alg} and section~\ref{subsec: analysis}, respectively. Numerical experiments are given in section~\ref{sec:exper}, followed by conclusions in section~\ref{sec:concl}. 

\section{Background}\label{sec:prelim}
In this section, we recall some basic theory about definite pairs and describe algorithms from~\cite{KressnerMPandurShao2013,MPandur2019}.

\subsection{Mathematical preliminaries}

\begin{definition} 
  A Hermitian  matrix pair $(A,B)$ is called \emph{positive} \emph{(negative)} \emph{definite} if there exists a real $\lambda_0$ such that the matrix $A-\lambda_0 B$ is positive \emph{(}negative\emph{)} definite~\textnormal{\cite{Veselic1993}}.
\end{definition}

\begin{remark}\label{rem:definite pair}
Recall that a  Hermitian matrix pair $(A,B)$ is \emph{definite} 
if there exist real constants $\alpha,\beta$ such that $\alpha A+\beta B$ is a positive definite matrix.
In this case, 
\begin{itemize}
    \item[(a)] if $\alpha\neq 0$,  then $(A,B)$ is a positive and/or negative definite pair with  $\lambda_0=-\beta/\alpha$;
    \item[(b)]  if $\alpha= 0$, then $\beta\neq 0$, and $B$ is either a positive or negative definite matrix. The pair $\AB$ with definite $B$ is simultaneously  positive  and negative definite because we can pick $\lambda_0\in\R$ so that the matrix $A-\lambda_0B$ is positive definite and pick some $\nu_0\in\R$ so that the matrix $A-\nu_0B$ is negative definite. 
\end{itemize}
Conversely, if some matrix pair is positive (negative) definite, then it is trivially a definite pair. If the pair $(A,B)$ is positive definite, then the pair $(-A,-B)$ is negative definite, so we present the theory only for positive definite matrix pairs. If the Hermitian matrix pair is not definite, we call it \emph{indefinite}.
 \hfill{$\diamond$}
\end{remark}

A positive definite matrix pair $(A,B)$ is regular  (meaning that $\det (A-\lambda B)$ is not identically zero) and can be diagonalized by a congruence transformation~\cite{StrikoVes1995,Lancaster2005,LiBai2013}. Its   finite eigenvalues are all real and can be ordered as 
\begin{equation}
\label{eq:eigenvalueordering}
\lambda_{n_-}^{-} \leq\dotsb \leq \lambda_{1}^{-} < \lambda_{1}^{+} \leq
\dotsb \leq \lambda_{n_+}^{+},
\end{equation}
where $\In(B) = (n_+,n_-,n_0)$. Each eigenvalue $\lambda_j^+$ is called $B$-positive ($\lambda_j^-$ is called $B$-negative), since it has an eigenvector $x$ that satisfies
$x^H Bx = 1$  ($x^H Bx = -1$). A definite pair $(A,B)$  has $B$-orthogonal eigenvectors for distinct eigenvalues ($x_1^HBx_2=0$ for eigenvectors $x_1,x_2$ belonging to different eigenvalues).
 The matrix $A-\lambda_0 B$ is positive definite for every $\lambda_0\in (\lambda_1^-,\lambda_1^+)$ and nowhere else. The open interval $(\lambda_1^-,\lambda_1^+)$ is called  the \emph{definiteness interval}, and every such $\lambda_0$ is called  a \emph{definitizing shift}.




\begin{theorem}\textnormal{(see \cite[Theorem 2.3]{KressnerMPandurShao2013})}\label{tm:interlacing}
Let $\AB$ be a positive definite matrix pair of order $n$ with finite eigenvalues ordered as in~\eqref{eq:eigenvalueordering} and
$U\in \C^{n\times p}$ such that $\rank\, U=p$.
Then the projected pair  $(U^H AU,U^H BU)$ is also positive definite;
therefore, its finite eigenvalues are real and can be ordered as follows:
\begin{equation}
\label{eq:ordertheta}
\nu_{p_-}^-\leq\dotsb\leq\nu_1^-<\nu_1^+\leq\dotsb\leq\nu_{p_+}^+,
\end{equation}
with $\In(U^H BU) = (p_+,p_-,p_0)$ and  $p_{\pm}\leq n_{\pm}$.
Moreover, the following interlacing inequalities hold:
\begin{align}
\lambda_i^+&\leq \nu_i^+\leq\lambda_{i+n-p+p_0}^+
\quad \text{for} \quad 1\leq i\leq p_+,\label{eq:inter1} \\
\lambda_j^-&\geq \nu_j^-\geq \lambda_{j+n-p+p_0}^-
\quad \text{for} \quad 1\leq j\leq p_-,\label{eq:inter2}
\end{align}
where we set $\lambda_i^+=\infty$ for $i>n_+$ and
$\lambda_j^-=-\infty$ for $j>n_-$.
\end{theorem}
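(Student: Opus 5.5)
Fix a definitizing shift $\lambda_0\in(\lambda_1^-,\lambda_1^+)$, so that $A-\lambda_0B\succ0$. Since $U$ has full column rank, $U^H(A-\lambda_0B)U\succ0$. Hence the projected pair is positive definite with the same shift $\lambda_0$, and by the facts recalled above its finite eigenvalues are real and split as in~\eqref{eq:ordertheta}. The bound $p_\pm\le n_\pm$ is Sylvester's law of inertia. A maximal subspace on which $U^HBU$ is positive definite is mapped by $U$ injectively onto a subspace of $\C^n$ of the same dimension on which $B$ is positive definite, so $p_+\le n_+$, and likewise $p_-\le n_-$.

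For the interlacing inequalities I would reduce to the classical Cauchy interlacing theorem by a congruence followed by an inversion. Put $C=A-\lambda_0B\succ0$ and $C=LL^H$, and let $M=L^{-1}BL^{-H}$. The relation $Ax=\lambda Bx$ is equivalent to $Cx=(\lambda-\lambda_0)Bx$. For a finite eigenvalue $\lambda\neq\lambda_0$ this gives $My=\mu y$ with $y=L^Hx$ and $\mu=1/(\lambda-\lambda_0)$, while infinite eigenvalues correspond to $\mu=0$. The map $\lambda\mapsto 1/(\lambda-\lambda_0)$ is decreasing on each side of $\lambda_0$. It sends $\lambda_1^+\le\dots\le\lambda_{n_+}^+$ to the positive eigenvalues $\mu_1^+\ge\dots\ge\mu_{n_+}^+>0$ of $M$, and the $B$-negative eigenvalues to the negative eigenvalues of $M$. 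By Sylvester, $M$ has inertia $(n_+,n_-,n_0)$.

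Next I would treat the projected pair the same way. Write $U=QR$ with $Q$ having orthonormal columns in the $C$-inner product, i.e.\ $Q^HCQ=I_p$. The projected pair is congruent, via $R$, to $(Q^HAQ,Q^HBQ)$, and the corresponding matrix $N=Q^HBQ$ equals $W^HMW$ with $W=L^HQ$, which has orthonormal columns. Its positive eigenvalues are $\eta_i^+=1/(\nu_i^+-\lambda_0)$. Cauchy interlacing for the compression of the $n\times n$ Hermitian matrix $M$ to dimension $p$ gives
\[
\mu_k(M)\ge\mu_k(N)\ge\mu_{k+n-p}(M),\qquad k=1,\dots,p,
\]
with eigenvalues in decreasing order. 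Taking $k=i\le p_+$ and recalling that $\mu_k(N)=\eta_i^+>0$ for these indices, I get $\mu_i^+\ge\eta_i^+$, which inverts to $\lambda_i^+\le\nu_i^+$.

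The upper bound is the main obstacle, because the index shift $n-p+p_0$ has to come out correctly. Cauchy gives $\eta_i^+\ge\mu_{i+n-p}(M)$. If $i+n-p\le n_+$, then $\mu_{i+n-p}(M)=\mu^+_{i+n-p}$, which would give the even stronger bound $\nu_i^+\le\lambda^+_{i+n-p}$. Otherwise the right-hand side is $\le0$, and the convention $\lambda=\infty$ applies. So I expect \eqref{eq:inter1} to follow, with care needed for the zero eigenvalues of $N$. One could also remove the $p_0$-dimensional null directions of $U^HBU$ (they carry infinite eigenvalues) and apply Cauchy to the reduced $(p-p_0)$-dimensional compression. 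That route yields exactly the stated index $i+n-p+p_0$ and also settles the bookkeeping of infinite eigenvalues. The $B$-negative inequalities \eqref{eq:inter2} follow symmetrically by applying the argument to $-M$, or to the pair $(-A,-B)$ with the roles of the two sides of $\lambda_0$ exchanged.
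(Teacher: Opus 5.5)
The paper gives no proof of this statement; it imports it from \cite{KressnerMPandurShao2013}. Your argument is a correct, self-contained proof. You use the congruence with $C=A-\lambda_0B=LL^H$ and the spectral transformation $\lambda\mapsto 1/(\lambda-\lambda_0)$. These turn both the pair and the projected pair into Hermitian matrices $M$ and $N=W^HMW$ with $W^HW=I_p$. Everything then reduces to the classical Cauchy interlacing theorem, without any variational (minimax) theory for definite pencils.

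Your hedging about the upper bound is unnecessary. For $i\le p_+$ the $i$-th largest eigenvalue of $N$ is $\eta_i^+>0$, however many zero eigenvalues $N$ has. Cauchy therefore gives $\nu_i^+\le\lambda^+_{i+n-p}$ whenever $i+n-p\le n_+$. Otherwise $i+n-p+p_0\ge i+n-p>n_+$, so the right-hand side of \eqref{eq:inter1} is $\infty$. Since $\lambda_k^+$ is nondecreasing in $k$, your bound $\nu_i^+\le\lambda^+_{i+n-p}$ is sharper than, and implies, \eqref{eq:inter1}. The extra $p_0$ in the stated index is what the route you mention at the end produces: first discard the $p_0$ null directions of $U^HBU$, then compress to a $(p-p_0)$-dimensional subspace. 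Your direct compression shows that this reduction is not needed and in fact loses sharpness.

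For \eqref{eq:inter2}, apply the argument to $-M$, or to the pair $(A,-B)$, which is positive definite with shift $-\lambda_0$. The pair $(-A,-B)$ that you propose is negative definite, since $-A-\lambda_0(-B)=-(A-\lambda_0B)\prec0$. So the positive-definite argument does not apply to it verbatim.
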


The following  Corollary~\ref{cor: interlacing} is a direct consequence of the Cauchy interlacing inequalities \eqref{eq:inter1} and \eqref{eq:inter2}. 

\begin{corollary}\label{cor: interlacing}
If $\AB$ of order $n$ is positive definite, then for every $U\in \C^{n\times p}$ such that $\rank\, U=p$ and $U^HBU$ is indefinite, the definiteness interval of $(U^H AU,U^H BU)$ contains the definiteness interval of $\AB$.
\end{corollary}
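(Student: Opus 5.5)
We derive the corollary from Theorem~\ref{tm:interlacing} by working at the ends of the spectrum, with $i=j=1$. Fix $U\in\C^{n\times p}$ with $\rank\,U=p$ and $U^HBU$ indefinite. By Theorem~\ref{tm:interlacing}, the projected pair $(U^HAU,U^HBU)$ is positive definite. Its finite eigenvalues are real and ordered as in~\eqref{eq:ordertheta}, so its definiteness interval is $(\nu_1^-,\nu_1^+)$. Here we use the statement recalled above for positive definite pairs: $A-\lambda_0B\succ0$ exactly on $(\lambda_1^-,\lambda_1^+)$, now applied to the projected pair.

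Since $U^HBU$ is indefinite, $p_+\geq 1$ and $p_-\geq 1$, so both $\nu_1^+$ and $\nu_1^-$ exist as finite numbers. This is the only point where the indefiniteness assumption enters. If, say, $p_-=0$, then the left endpoint would be $-\infty$ and the interval would be unbounded, which still contains $(\lambda_1^-,\lambda_1^+)$. Because $p_\pm\leq n_\pm$, we also have $n_\pm\geq1$, so $\lambda_1^\pm$ are finite.

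Now apply \eqref{eq:inter1} with $i=1$ to get $\lambda_1^+\leq\nu_1^+$, and \eqref{eq:inter2} with $j=1$ to get $\lambda_1^-\geq\nu_1^-$. Together these give
\begin{equation*}
\nu_1^-\leq\lambda_1^-<\lambda_1^+\leq\nu_1^+ ,
\end{equation*}
and hence $(\lambda_1^-,\lambda_1^+)\subseteq(\nu_1^-,\nu_1^+)$, which is the claim.

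There is also a direct check that avoids the interlacing bounds. For any $\lambda_0\in(\lambda_1^-,\lambda_1^+)$ we have $A-\lambda_0B\succ0$. Since $U$ has full column rank, $U^H(A-\lambda_0B)U\succ0$, so $\lambda_0$ is a definitizing shift for the projected pair.

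The only point needing care is the identification of the definiteness interval of the projected pair with $(\nu_1^-,\nu_1^+)$. This requires the projected pair to be positive definite, which Theorem~\ref{tm:interlacing} supplies, and both extreme Ritz values to be finite, which the indefiniteness of $U^HBU$ guarantees. There is no real obstacle beyond this bookkeeping.
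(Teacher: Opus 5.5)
Your argument is correct and is the same as the paper's. The paper obtains the corollary directly from the interlacing inequalities \eqref{eq:inter1} and \eqref{eq:inter2} with $i=j=1$, which give $\nu_1^-\leq\lambda_1^-<\lambda_1^+\leq\nu_1^+$; your supplementary direct check (compressing $A-\lambda_0B\succ 0$ by a full-column-rank $U$, as in the proof of Lemma~\ref{lemma:pos-neg}) is also valid and avoids the interlacing theorem altogether.
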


Recall, a Hermitian matrix pair $(A,B)$ is definite if and only if the Crawford number 
\begin{equation} \label{eq: Crawford} 
\begin{split}
  \gamma(A,B)   & :=\min\{|u^H(A+iB)u)|:u\in \C^n,\|u\|_2=1\}\\ 
& = \min\,\{\sqrt{(u^HAu)^2+(u^HBu)^2}: u\in \C^n,\|u\|_2=1\}
\end{split}
\end{equation}
is positive~\cite{STEWART197969}. 
\subsection{Algorithms for Detecting Definiteness of Small- to Medium-Sized Matrix Pairs}\label{sec:existing alg}
This section reviews several algorithms for detecting definiteness, suitable for small- to medium-sized Hermitian matrix pairs of order $n$. They are not suitable for large matrix pairs, since either the computational cost of one iteration  is high or the overall computation is very time-consuming.

In 1993, Mor{\'e}~\cite{More1993} proposed a bisection algorithm on the real line for computing a real $\lambda$ such that $A-\lambda B$ is  positive definite, or to determine that such 
$\lambda$ does not exist. At each iteration, the algorithm performs an attempted Cholesky factorization to test
the definiteness of some linear combination of the given matrices. 
The algorithm provides no criterion for declaring a given pair indefinite, other than determining whether the length of the current interval is smaller than a prescribed tolerance $\varepsilon$. The number of iterations  $j$ needed to achieve the  required tolerance $\varepsilon$, is bounded by
$$\displaystyle j\leq \Big\lceil \log_2\frac{\varepsilon_0}{\varepsilon}\Big\rceil,$$
with an initial bracket size $\varepsilon_0=\lambda_u-\lambda_l$, where $(\lambda_l,\lambda_u)$ is an initial finite interval that contains the definiteness interval if the pair is definite,  and the required bracket size $\varepsilon\leq \varepsilon_0$. So, for example, in an indefinite case with $\varepsilon_0=10$ and $\varepsilon=10^{-15}$ the algorithm from~\cite{More1993} performs 53 attempted Cholesky factorizations. So, it can be very time-consuming.

\medskip 
In 1993, Veseli\'c~\cite{Veselic1993} proposed a Jacobi-type eigenreduction algorithm (the so called $J$-Jacobi algorithm) for  definite matrix pairs $(A,J)$ of real symmetric matrices where $J=\diag(I_{n^+},-I_{n^-})$ is a diagonal matrix of signs. The algorithm uses $J$-orthogonal congruences to diagonalize the pair  $(A,J)$. 
If  it is app\-lied to  a symmetric  pair whose definiteness is unknown, the $J$-Jacobi either converges or discovers the indefiniteness in finitely many steps~\cite[Theorem 2.4]{Veselic1993}.
 The $J$-Jacobi algorithm is asymptotic quadratic convergent~\cite{HariDrmac,Matejas2006434} and relatively accurate~\cite{Slapnicar2003387,SlapnicarPhD} when applied to $(A,J)$ with positive definite $A$.  The  algorithm can also be applied to a complex Hermitian matrix pair $(A,J)$ (e.g.,~\cite[Section~2]{SINGER20125704}).

 We cannot directly apply the $J$-Jacobi algorithm to a general Hermitian matrix pair $(A,B)$ with nonsingular $B\neq \diag(\pm 1)$. A preprocessing step is required (see the last paragraph of section~\ref{subsec: LOBPCG alg}) to construct an auxiliary matrix pair $(H,J)$, which for a dense matrix costs $11n^3/3$ flops. After that, we apply the $J$-Jacobi method to this pair, with a computational cost of $4 s n^3$ flops, or potentially less~\cite[Section 4]{Veselic1993}, where $s$ denotes the number of sweeps performed. 


\medskip
In 1994, Keller~\cite[Section 3.6.2]{Keller1994} introduced a coordinate-relaxation method for detecting definiteness of a matrix pair, based on~\cite[Corollary 3.7]{StrikoVes1995}, which uses the subspaces of dimension 2. The algorithm is very simple and can be employed for a matrix pair even when matrix factorizations are not feasible. Although each iteration is inexpensive (requiring only $O(n^2)$ flops), verifying that a given $\AB$ is definite demands the computation of both $\lambda_1^-$ and $\lambda_1^+$, which becomes very time-consuming for large definite pairs $\AB$. The method uses two  criteria to decide whether a given pair is indefinite.

\medskip
In 2002, Higham, Tisseur and Van Dooren~\cite[Algorithm 2.3]{htvd2002} proposed a bisection algorithm for computing the interval that contains the Crawford number of a given Hermitian matrix pair.  
Each iteration of this algorithm is computationally expensive, since it involves computing all eigenvalues of a non-Hermitian quadratic eigenvalue problem of order $n$, together with  up to $2n$ computations of the smallest eigenvalue of a Hermitian matrix of order $n$. In the same paper~\cite[Algorithm 2.4]{htvd2002}, a more efficient approach is proposed: the level set algorithm.  This algorithm computes all $2n$ eigenvalues of just one non-Hermitian quadratic eigenvalue problem of order $n$, and then verifies one simple condition to decide wheather a given pair is definite.

\medskip
In 2010, Guo, Higham and Tisseur proposed the improved arc algorithm~\cite[Algorithm~2.3]{Tisseur2010}.  
This method is a bisection-like algorithm on the unit circle for finding a real value $t$ for which $B(t):=A\sin t + B\cos t$ is positive definite, or to  conclude that no such $t$ exists. At each iteration, the improved arc algorithm performs an attempted
Cholesky factorization  to check the definiteness of $B(t)$ at the current  value of $t$. It has two criteria to declare that a given pair is indefinite: if it finds some unit vector $z$ such that $z^HAz=z^HBz=0$, or the length of the arc is greater than or equal to $\pi$. The algorithm is backward stable and linearly convergent. 
The efficiency of the  algorithm significantly depends on the way how the Cholesky factorization is performed: the authors of~\cite{Tisseur2010} recommend the Cholesky factorization with complete pivoting. The arc algorithm can, in principle, be applied to large and sparse matrix pairs (when combined with a sparse Cholesky factorization routine), but it is frequently very time-consuming in practice.



\subsection{Algorithms for Detecting Definiteness of Large-Scale Matrix Pairs}\label{sec:existing alg large}

The following eigenvalue optimization formula
\begin{equation}\label{eq:Crawford global max}
\displaystyle \gamma(A,B)=\max\Big\{\max_{\theta\in[0,2\pi]} \lambda_{\min} (A\cos\theta+B\sin\theta),0  \Big\},
\end{equation} 
holds~\cite[Theorem 2.1]{CHENG199963}, \cite[Equation (2.8)]{htvd2002}.
In 2018, Kressner, Lu, and Vandereycken proposed in~\cite{KressnerLuVan2018} a subspace method,\footnote{This method is derived from a more general algorithm described in~\cite[Algorithm~1]{Mengi2018}; see also~\cite[Algorithm~2]{Mengi2019nons}.} which is based on~\eqref{eq:Crawford global max}, for computing the Crawford number of a general large matrix $A+iB$ with Hermitian matrices $A$ and $B$.
In each iteration,~\cite[Algorithm~1]{KressnerLuVan2018} solves a Hermitian linear eigenvalue problem of order $n$ for the smallest eigenvalue, and a corresponding eigenvector. That eigenvalue is a lower bound for the Crawford number, while the Crawford number of a projected pair is an upper bound. The algorithm has a local convergence of order $1+\sqrt{2}$, and is applicable when the matrices are too large to fit completely in memory or are given impli\-ci\-tly through procedures $x\mapsto Ax$ and $x\mapsto Bx$ for a given vector $x$. In the context of definiteness detection, the author and Kuzmanovi\'c Ivi\v{c}i\'c proposed Algorithm~4.2 in ~\cite{PandurKuzmano} as the modification of~\cite[Algorithm~1]{KressnerLuVan2018} in a way that the algorithm stops when the smallest eigenvalue $\lambda_{\min} (A\cos\theta+B\sin\theta)$ for a particular $\theta$, is positive (indicating definiteness of $(A,B)$), or when one of several conditions for indefiniteness is fulfilled. Similarly,~\cite[Algorithm~4.3]{PandurKuzmano} is a modification of the 3-vector subspace method~\cite[Algorithm~2]{KressnerLuVan2018}. Although the algorithms in~\cite{PandurKuzmano} may terminate in the first iteration, they require more execution time than Algorithm~\ref{alg:det def} in the experiments presented in section~\ref{sec:exper}.

\medskip

In 2020, Lu proposed~\cite[Algorithm~5.1]{Ding2018} that solves the eigenvalue optimization problem~\eqref{eq:Crawford global max} by reformulating it into a specific  nonlinear eigenvalue problem with eigenvector nonlinearity (NEPv)
$H(x)x=\lambda x$, $x\neq 0$,
where $H(x)=g_1(x)A+g_2(x)B$, and $g_1,g_2$ are specific functions.
Algorithm~5.1 in~\cite{Ding2018} is a nonlinear  locally optimal block preconditioned conjugate gradient (LOBPCG) type of algorithm that aims to compute the smallest eigenvalue $\lambda_{\min}(H(x))$  and a corresponding eigenvector $x$. The algorithm stops when the norm of the residual $(H(x_k)-\mu_kI)x_k$ becomes smaller than a specified tolerance, where $\mu_k$ denotes the $k$th approximation of the desired eigenvalue, which in turn approximates the Crawford number of the pair $(A,B)$. In some cases, the algorithm terminates because the eigenvalue approximations lose monotonicity. However, the norm of the current residual may still exceed the specified tolerance, and the Crawford number is therefore not computed to the desired precision. In the context of definiteness detection, we modify~\cite[Algorithm~5.1]{Ding2018} and terminate it when the approximation of the Crawford number falls below a specified tolerance, indicating that the original matrix pair is close to being indefinite. This algorithm is also applicable when the matrices are too large to fit completely in memory or are given implicitly through procedures $x\mapsto Ax$ and $x\mapsto Bx$ for a given vector $x$. 

\subsection{Indefinite Preconditioned Gradient Iterations}\label{subsec: grad iter}

Preconditioned eigenvalue solvers for definite matrix pairs $(A,B)$ with indefinite $B$ are proposed in~\cite{KressnerMPandurShao2013,MPandur2019}.  These algorithms compute a small number of eigenvalues that are closest to the definiteness interval $\mathcal{I}$, and the corresponding eigenvectors. More precisely, they compute $k_+$ smallest $B$-positive eigenvalues that are on the right side of $\mathcal{I}$,  and $k_-$ largest $B$-negative eigenvalues that are on the left side of $\mathcal{I}$ of a given positive definite matrix pair. Since the desired eigenvalues are inner eigenvalues of some definite matrix pair, a shift  $\lambda_0$ from the definiteness interval $\mathcal{I}$ must be known to construct an effe\-ctive preconditioner $T$.  
The convergence of the algorithms in~\cite{KressnerMPandurShao2013,MPandur2019} heavily relies on a good choice of a  preconditioner. The analysis in~\cite{KressnerMPandurShao2013,MPandur2019} for a positive definite matrix pair suggests that choosing $T_+\approx (A-\lambda_0^+B)^{-1}$ with a definitizing shift $\lambda_0^+$ close to $\lambda_1^+$ yields reasonable convergence for the smallest $B$-positive eigenvalues. Similarly, a preconditioner $T_-\approx (A-\lambda_0^-B)^{-1}$ with a definitizing shift $\lambda_0^-$ close to $\lambda_1^-$ yields reasonable convergence for the largest $B$-negative eigenvalues. So it would be difficult to find a shift that works equally well for $B$-positive and $B$-negative eigenvalues simultaneously. Therefore, algorithms that use two shifts and two preconditioners are also proposed  in~\cite{KressnerMPandurShao2013,MPandur2019}.  The numerical experiments  show that the algorithms with two preconditioners are much more efficient than the algorithms with one preconditioner. The shifts from the definiteness interval $\mathcal{I}$ yield  positive definite preconditioners, for which there exists a convergence theory. Nevertheless, the shifts outside of $\mathcal{I}$, but close to its endpoints, which produce indefinite preconditioners, could also be very efficient, as numerical experiments demonstrate.

The algorithms in~\cite{KressnerMPandurShao2013,MPandur2019} produce a sequence of matrices
$$X_0,X_1,X_2,\ldots\in\C^{n\times k},\qquad k=k_++k_-,$$ as follows. In the $i$-th iteration of the $(m)$-scheme method with one preconditioner, $m\geq 2$, from~\cite{MPandur2019} the subspace
\begin{equation}\label{eq: subspace basis}
    \Span U_i=\Span [X_i,W_i,X_{i-1},X_{i-2},\ldots,X_{i-m+2}]
\end{equation}
is considered (a matrix $X_{-j}$ is an empty matrix), where
$W_i=TR_i:=T(A X_i-BX_iD_i)$
is the preconditioned residual for some Hermitian  matrix  $T\in\Cnn$, and 
$D_i:=(X_i^HBX_i)^{-1}X_i^HAX_i$.
Then, the standard Rayleigh-Ritz procedure is performed
 by computing the eigenpairs of the projected pair $(U_i^HAU_i,U_i^HBU_i)$ with order eigenvalues 
$$\nu_{i+1;p_-}^-\leq \cdots\leq\nu_{i+1;1}^-<\nu_{i+1;1}^+\leq \cdots\leq \nu_{i+1;p_+}^+,$$
where $\In(U_i^HBU_i)=(p_+,p_-,p_0)$.
Then 
\begin{equation}
\begin{array}{cl}\label{eq: D i Y}
  D_{i+1}  = & \diag(\nu_{i+1;k_-}^-,\ldots,\nu_{i+1;1}^-,\nu_{i+1;1}^+, \ldots,\nu_{i+1;k_+}^+), \\
   Y_{i+1}  =& [y_{i+1;k_-}^-,\ldots,y_{i+1;1}^-,y_{i+1;1}^+,\ldots,y_{i+1;k_+}^+]
\end{array}
\end{equation}
is returned, where $y_{i+1;j}^\pm$ is  $U_{i}^H B U_{i}$-normalized\footnote{A vector $z$ is $M$-\emph{normalized} if $|z^HMz|=1$ for a given Hermitian matrix $M$.} eigenvector of the projected pair $(U_i^HAU_i,U_i^HBU_i)$ belonging to $\nu_{i+1;j}^\pm$. The Ritz values and vectors are  given as $\nu_{i+1;j}^\pm$ and $U_{i}y_{i+1;j}^\pm$, respectively. So, the  matrix $X_{i+1}=U_iY_{i+1}$ of the Ritz vectors has $B$-orthonormal columns.\footnote{We call vectors $x_i$ and $x_j$  $B$-\emph{orthonormal} if $|x_i^HBx_j|=\delta_{ij},$ where $\delta_{ij}$ is the Kronecker delta symbol.} 

 For $m=3$ we have the indefinite locally optimal block preconditioned conjugate gradient (LOBPCG) method from~\cite{KressnerMPandurShao2013}, and for $m=2$ we have the so-called indefinite block preconditioned steepest descent/ascent (BPSD/A) method (a truncated version of LOBPCG). 
In the version with two preconditioners $T_+$ and $T_-$, a  preconditioned residual is formed by splitting the residual $R$ into two parts: $R_{+}$ and $R_{-}$ associate with $B$-positive Ritz values and $B$-negative Ritz values, respectively, so $W=[T_+R_{+},T_-R_{-}]$.

\section{Subspace algorithms}\label{sec:det def subspaces}

 In this section, we introduce algorithms for detecting the (in)defini\-te\-ness of a given Hermitian matrix pair $(A,B)$.  
We begin by presenting the basic subspace algorithm, while a specialized version of the algorithm is discussed in Section~\ref{subsec: LOBPCG alg}.
 The proposed algorithms use several criteria to declare indefiniteness, whereas definiteness is confirmed through a successfully completed Cholesky factorization. In the definite case, our algorithms yield a single definitizing shift along with a crude estimate of the definiteness interval. 

\begin{lemma}\label{re: B ind}
If the matrix $B$ is indefinite, then the pair $\AB$ cannot be simultaneously a positive definite and a negative definite pair.
\end{lemma}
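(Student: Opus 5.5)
We argue by contradiction. Suppose $B$ is indefinite and yet $\AB$ is both positive definite and negative definite. By the definition, there exist real $\lambda_0$ and $\nu_0$ such that
\[
A-\lambda_0 B\succ 0 \qquad\text{and}\qquad A-\nu_0 B\prec 0 .
\]
The plan is to subtract these two inequalities to eliminate $A$ and obtain a definiteness statement about $B$ alone. This would contradict the indefiniteness of $B$.

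Concretely, take the difference
\[
(A-\lambda_0 B)-(A-\nu_0 B)=(\nu_0-\lambda_0)B .
\]
It is the difference of a positive definite matrix and a negative definite matrix. Hence it is the sum of two positive definite matrices, and so it is positive definite. In particular $(\nu_0-\lambda_0)B\succ 0$.

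Next we rule out $\nu_0=\lambda_0$. In that case the left-hand side would be the zero matrix, which is not positive definite. (Equivalently, the same Hermitian matrix cannot be both $\succ 0$ and $\prec 0$.) So $\nu_0-\lambda_0\neq 0$.

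Dividing by the nonzero real scalar $\nu_0-\lambda_0$ then gives $B\succ 0$ if $\nu_0>\lambda_0$, and $B\prec 0$ if $\nu_0<\lambda_0$. In either case $B$ is definite, which contradicts the hypothesis. This proves the lemma. The argument is also the converse of Remark~\ref{rem:definite pair}(b): a pair is simultaneously positive and negative definite precisely when $B$ is definite.

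No step is genuinely hard. The only point needing care is the degenerate case $\lambda_0=\nu_0$, which is handled as described above. One can also check everything with quadratic forms: for every nonzero $x$,
\[
(\nu_0-\lambda_0)\,x^HBx = x^H(A-\lambda_0B)x - x^H(A-\nu_0B)x > 0 ,
\]
so $x^HBx$ has the same fixed sign for all $x\neq 0$. This directly shows that $B$ is definite.
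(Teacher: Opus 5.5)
Your proof is correct, and it takes a different route from the paper's. The paper argues spectrally. It starts from a positive definite pair with indefinite $B$ and notes that such a pair has at least one $B$-positive and at least one $B$-negative eigenvalue. It then invokes \cite[Proposition 4.1]{StrikoVes1995} and \cite[Lemma 3.8]{LiBai2013} to conclude that $A-\lambda B$ is indefinite for every $\lambda$ outside $[\lambda_1^-,\lambda_1^+]$. Combined with positive definiteness on the definiteness interval, this means no shift makes $A-\lambda B$ negative definite.

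You instead eliminate $A$ algebraically: $(A-\lambda_0B)-(A-\nu_0B)=(\nu_0-\lambda_0)B\succ 0$ forces $B$ to be definite. Your argument is self-contained and uses none of the structure theory of definite pairs (regularity, simultaneous diagonalization, the ordering~\eqref{eq:eigenvalueordering}). It also proves slightly more. It only uses that $B$ is not definite, so it covers singular semidefinite $B$ too, whereas the paper's proof needs $n_+,n_-\geq 1$. As you note, it also gives the exact converse of Remark~\ref{rem:definite pair}(b).

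What the paper's route buys is finer information. It identifies the set of shifts for which $A-\lambda B$ is positive definite, namely the definiteness interval, and shows that $A-\lambda B$ is indefinite outside its closure. That fits the eigenvalue-based viewpoint used throughout the paper. For this lemma alone, your argument is the more economical one.
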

\begin{proof}
     Let $\AB$ be a positive definite pair. When $B$ is indefinite, we have at least one $B$-positive eigenvalue and at least one $B$-negative eigenvalue of $(A,B)$. Then for any $\lambda\in (-\infty,\lambda_1^-)\cup(\lambda_1^+,+\infty)$ the matrix $A-\lambda B$ is indefinite by~\cite[Proposition 4.1]{StrikoVes1995} and~\cite[Lemma 3.8]{LiBai2013}. 
\end{proof}

\begin{theorem}\textnormal{(see \cite[Theorem 3.10]{StrikoVes1995} and~\cite[Theorem 2.2]{LiBai2013})}\label{tm: intersection}
Let $\AB$ be  a Hermitian matrix pair of order $n$, and suppose that it is regular. Suppose $1\leq k_+\leq n_+$ and $1\leq k_-\leq n_-$ and that the  definiteness intervals of the projected pairs $(U^H AU,J_k)$, taken for all $U$ satisfying $U^H BU=J_k:=\diag(I_{k_+},I_{k_-})$, have a nonvoid intersection $\mathcal{I}$. Then $(A,B)$ is positive definite, and $\mathcal{I}$ is the definiteness interval of $\AB$. 
\end{theorem}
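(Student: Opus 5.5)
The plan is to fix an arbitrary $\lambda_0\in\mathcal{I}$ and show that $A-\lambda_0B\succ 0$. This makes $\AB$ positive definite and gives $\mathcal{I}$ inside its definiteness interval. For the reverse inclusion, any $U$ with $U^HBU=J_k$ (read as $\diag(I_{k_+},-I_{k_-})$) has $U^HBU$ indefinite. Corollary~\ref{cor: interlacing} then says the definiteness interval of $\AB$ lies in the definiteness interval of every $(U^HAU,J_k)$, hence in their intersection $\mathcal{I}$. So the real content is the first step.

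The basic device is an embedding claim: a vector $x$ satisfying a suitable nondegeneracy condition lies in $\Span U$ for some $U$ with $U^HBU=J_k$. Since $\lambda_0$ is in the definiteness interval of $(U^HAU,J_k)$, we have $U^H(A-\lambda_0B)U\succ 0$. Writing $x=Uc$ with $c\neq 0$ gives $x^H(A-\lambda_0B)x>0$. I would split into cases according to $x^HBx$.
\begin{itemize}
\item If $x^HBx>0$, scale $x$ so that $x^HBx=1$ and take it as the first column. The $B$-orthogonal complement $\{y: y^HBx=0\}$ is complementary to $\Span\{x\}$, and $B$ restricted to it has inertia $(n_+-1,n_-,n_0)$. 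Since $k_+-1\le n_+-1$ and $k_-\le n_-$, we can complete $x$ to a $B$-orthonormal $U$ with the right signature.
\item If $x^HBx<0$, the argument is symmetric.
\item If $x^HBx=0$ but $Bx\neq0$, choose $y$ with $y^HBx=1$. Then $B$ restricted to $\Span\{x,y\}$ is nondegenerate with inertia $(1,1)$, so there are $B$-orthonormal $u,v$ with $u^HBu=1$, $v^HBv=-1$ spanning it, and $x$ lies in their span. Complete these two vectors exactly as in the first case, using $k_\pm\ge1$.
\end{itemize}

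The remaining case is $Bx=0$. Here I would take any $u$ with $u^HBu=1$, extend it to an admissible $U$, and replace $u$ by $u+tx$. This preserves $U^HBU=J_k$ for every $t\in\R$. Positivity of $(u+tx)^H(A-\lambda_0B)(u+tx)$ for all $t$ forces $x^HAx\ge0$. Combining the cases gives $A-\lambda_0B\succeq 0$ for every $\lambda_0\in\mathcal{I}$, with strict positivity on all $x$ except possibly those in $\ker B$.

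To upgrade to strict definiteness, suppose $x\neq0$ satisfies $x^H(A-\lambda_0B)x=0$; by semidefiniteness, $(A-\lambda_0B)x=0$. The cases above force $Bx=0$, which then gives $Ax=0$. In that case $\det(A-\lambda B)\equiv 0$, contradicting regularity. Hence $A-\lambda_0B\succ0$.

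I expect the main obstacle to be the completion step. It needs the inertia count of $B$ on the $B$-orthogonal complement of a nondegenerate subspace, which is a Sylvester/Witt-type argument that must handle a singular $B$ (the $n_0$ part). It also needs the bounds $k_\pm\le n_\pm$, which is where those hypotheses enter.
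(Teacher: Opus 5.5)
The paper gives no proof of this statement; it is quoted from \cite[Theorem~3.10]{StrikoVes1995} and \cite[Theorem~2.2]{LiBai2013}. So there is no in-paper argument to match, and your proposal stands as a correct, self-contained proof, with $J_k$ rightly read as $\diag(I_{k_+},-I_{k_-})$.

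\emph{Embedding step.} It is sound in all three cases with $Bx\neq 0$. A nondegenerate subspace $S$ and its $B$-orthogonal complement are complementary, and the form is block diagonal with respect to this splitting. Sylvester's law then gives the complement the inertia $\In(B)-\In(B|_S)$, including the whole $n_0$ part. The completion therefore works precisely because $1\le k_\pm\le n_\pm$.

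\emph{The $\ker B$ case.} Replacing $u$ by $u+tx$ with $t\in\R$ is valid, since $Bx=0$ leaves $U^HBU=J_k$ intact.

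\emph{Role of regularity.} The upgrade from $A-\lambda_0B\succeq 0$ to $A-\lambda_0B\succ 0$ is exactly where regularity is indispensable. For the singular pair $A=\diag(1,1,0)$, $B=\diag(1,-1,0)$, no admissible $\Span U$ can contain $e_3$: if $e_3=Uc$ then $J_kc=U^HBe_3=0$. Hence every projection is positive definite for all $\lambda_0\in(-1,1)$, yet no $A-\lambda B$ is positive definite.

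\emph{Reverse inclusion.} Obtaining it from Corollary~\ref{cor: interlacing}, or directly from $U^H(A-\lambda B)U\succ 0$ whenever $A-\lambda B\succ 0$, is also fine.

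Compared with the paper's bare citation, your route buys an elementary argument that uses only Sylvester's law of inertia and the definition of the definiteness interval. It also makes transparent where each hypothesis enters: $k_\pm\ge 1$ in the isotropic case, $k_\pm\le n_\pm$ in the completion, and regularity in excluding a common null vector of $A$ and $B$.
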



\begin{lemma}\label{lemma:pos-neg}
   Let $\AB$ be a regular Hermitian matrix pair of order $n$ with an indefinite $B$, that is, $n_-\geq 1$ and $n_+\geq 1$. 
The pair $\AB$ is positive (negative) definite if and only if the projected pairs $(U^H AU,U^H BU)$ are  positive (negative) definite for all matrices  $U\in \C^{n\times p}$ with full column rank,  such that $\In(U^H B U)\geq (1,1,0)$, where the inequality is understood element-wise, and for all $p=1, \ldots,n$.  
\end{lemma}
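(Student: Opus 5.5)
Both implications are proved for the positive definite case; the negative definite case follows by applying that argument to $(-A,-B)$, as in Remark~\ref{rem:definite pair}.

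\emph{Necessity.} Assume $(A,B)$ is positive definite. Then Theorem~\ref{tm:interlacing} applies to every full column rank $U\in\C^{n\times p}$ and shows that $(U^HAU,U^HBU)$ is positive definite. More directly, choose a definitizing shift $\lambda_0$ with $A-\lambda_0B\succ 0$. Since $U$ has full column rank, $U^H(A-\lambda_0B)U\succ 0$, so the same $\lambda_0$ definitizes every projected pair, in particular those with $\In(U^HBU)\geq(1,1,0)$. This direction is routine.

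\emph{Sufficiency.} Assume every projected pair with $\In(U^HBU)\geq(1,1,0)$ is positive definite. I would apply Theorem~\ref{tm: intersection} with $k_+=k_-=1$; the hypotheses $n_\pm\geq1$ allow this, and regularity is assumed. Every $U\in\C^{n\times 2}$ with $U^HBU=J_1=\diag(1,-1)$ has full column rank and inertia $(1,1,0)$, so by hypothesis $(U^HAU,J_1)$ is positive definite. Its definiteness interval $(\nu_1^-(U),\nu_1^+(U))$ is nonempty.

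It remains to show that these intervals, over all such $U$, have a nonempty common intersection. This is the main obstacle: each interval is nonempty, but the family might still fail to intersect.

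To handle it, I would argue by contradiction. The intersection is empty exactly when there exist two admissible matrices $U_1$ and $U_2$ with $\nu_1^+(U_1)\le\nu_1^-(U_2)$. Note that $\nu_1^+(U_1)=x_1^HAx_1$ for a vector $x_1=U_1y$ with $x_1^HBx_1=1$, and $\nu_1^-(U_2)=x_2^HAx_2$ for a vector $x_2$ with $x_2^HBx_2=-1$. Write $a=x_1^HAx_1$ and $b=x_2^HAx_2$, so $a\le b$.

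Now project onto $\Span[x_1,x_2]$. The Gram matrix $\begin{pmatrix}1&c\\ \bar c&-1\end{pmatrix}$ has negative determinant, so the two vectors are independent with inertia $(1,1,0)$, and the hypothesis says this $2\times2$ pair is positive definite. Its $B$-positive Ritz value $\mu^+$ satisfies $\mu^+\le a$, because the minimum of $x^HAx$ over $B$-positive normalized vectors in the span is at most the value at $x_1$. Likewise its $B$-negative Ritz value satisfies $\mu^-\ge b$. Together these give $\mu^+\le a\le b\le\mu^-$, which contradicts $\mu^-<\mu^+$ for a positive definite pair, as stated in \eqref{eq:ordertheta}.

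For the extremal characterization I would use the $2\times2$ Rayleigh-type (min-max) property of positive definite pairs, applicable to the $2\times2$ projected pair. Hence the intersection is nonempty, and Theorem~\ref{tm: intersection} shows that $(A,B)$ is positive definite.
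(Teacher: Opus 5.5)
\textbf{Gap in the sufficiency direction.} Your necessity argument is fine and matches the paper's. For sufficiency, however, you overlooked that the hypothesis ranges over all $p=1,\ldots,n$. The choice $U=I_n$ (with $p=n$) is admissible, because $\In(B)=(n_+,n_-,n_0)\geq(1,1,0)$. The projected pair is then $(A,B)$ itself, so it is positive (negative) definite by hypothesis. That one line is the paper's entire proof of sufficiency, and it covers the negative definite case directly.

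Your detour through Theorem~\ref{tm: intersection} with $k_\pm=1$ aims at a stronger statement, using only $2\times2$ projections, and its key step fails. You claim the intersection is empty exactly when some $U_1,U_2$ satisfy $\nu_1^+(U_1)\le\nu_1^-(U_2)$. This is false for infinite families of open intervals: the intervals $(0,1/k)$, $k\in\mathbb{N}$, overlap pairwise but have empty intersection. Your pairwise argument therefore only gives $\sup_U\nu_1^-(U)\le\inf_U\nu_1^+(U)$. If equality holds and one of the two extrema is attained, the intersection is empty even though every two intervals overlap. Nothing in your argument yields the strict inequality, and there is no compactness to appeal to, since $\{x: x^HBx=\pm1\}$ is unbounded.

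There is also a sign slip. If $x_2^HBx_2=-1$, then $\nu_1^-(U_2)=-x_2^HAx_2$, so the comparison must be made with Rayleigh quotients $x^HAx/x^HBx$.

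The $2\times2$ version can be rescued, but only with an extra idea, roughly as follows.
\begin{itemize}
\item Your pairwise comparison holds for arbitrary $B$-positive and $B$-negative vectors. From it one gets $A-cB\succeq0$ for some $c$ between the supremum and the infimum.
\item Suppose $A-cB$ has a $B$-neutral null vector $z$. Then $Bz\neq0$ by regularity, so one can pick $w$ with $w^HBz\neq0$. The pair projected onto $\Span[z,w]$ has inertia $(1,1,0)$, yet the $(1,1)$ entry of $\hat A-\mu\hat B$ vanishes for every $\mu$. This contradicts the hypothesis.
\item Otherwise the null space of $A-cB$ is $B$-definite, and a small shift of $c$ gives $A-\lambda B\succ0$.
\end{itemize}
None of this is needed for the lemma as stated.
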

\begin{proof}
Let $A-\lambda_0 B\succ 0$.  If $U$ is of full column rank, then for every $y\neq 0$ holds $0\neq Uy=:x$. Therefore, 
$$y^H U^H (A-\lambda_0 B)  U y=x^H (A-\lambda_0 B)x>0$$
meaning $U^H (A-\lambda_0 B) U$ is a positive definite matrix. The additional assumption  $\In(U^H B U)\geq (1,1,0)$ means that $U^H B U$ is indefinite, and by Lemma~\ref{re: B ind} the pair $(U^H AU,U^H B U)$ cannot simultaneously be positive and negative definite. So, this projected pair is only positive definite. The proof is analogous for a negative definite case. For the sufficient condition, just take $U=I_n$. 
\end{proof}

  If $A$ or $B$, or both, are definite matrices, then the pair $\AB$ is trivially definite. So, from now on, we assume that $B$ is indefinite. Corollary~\ref{cor: interlacing}, Theorem~\ref{tm: intersection} and Lemma~\ref{lemma:pos-neg} give a hint on how to use projected pairs $(U^HAU,U^HBU)$ in  deciding whether a given Hermitian pair $(A,B)$ is definite or not. We now describe the \textit{\textbf{basic subspace algorithm}} for detecting definiteness (cf.~\cite[Algorithm~4.1]{PandurKuzmano}). We  iteratively choose a subspace $\Span U$ such that $U^HBU$ is indefinite and forms a projected pair $(U^HAU,U^HBU)$. If some projected pair is indefinite, so is the original pair $(A,B)$  by Theorem~\ref{tm:interlacing}. If the de\-fi\-ni\-teness intervals of two definite projected pairs have a void intersection, then the original pair is indefinite according to Theorem~\ref{tm: intersection}. If one projected pair is positive definite and another is negative definite, then the necessary condition of Lemma~\ref{lemma:pos-neg} is violated, and the original pair is indefinite. The definiteness interval of any definite projected pair contains the definiteness interval of the original definite pair by  Corollary~\ref{cor: interlacing}. If $A-\nu B$ is a positive or negative definite matrix for some number $\nu$ from the intersection of the definiteness intervals of all previously formed definite projected pairs, then the original pair $(A,B)$ is definite by Remark~\ref{rem:definite pair}. We  also use 
 \textit{a sufficient condition} for indefiniteness: if $z^H Az=z^H B z=0$ for some  $z\in\Cn$ such that $\|z\|_2=1$, then $\AB$ is indefinite. Since in this case  there are no real numbers $\alpha,\beta$ such that $z^H(\alpha A + \beta B)z>0$ for all $z\in\Cn$, $z\neq 0$, that is, there is no real  linear combination of the matrices $A$ and $B$ that is a positive definite matrix.

\subsection{A BPSD/A and LOBPCG type of algorithms}\label{subsec: LOBPCG alg}

The main question in the basic subspace algorithm is how to choose new search subspaces.
 We propose to use subspaces as in~\cite{KressnerMPandurShao2013,MPandur2019}. As mentioned in section~\ref{subsec: grad iter}, these algorithms use definite projected pairs and give their definiteness intervals. For simplicity, we write the algorithm only for $m=2$ (BPSD/A type) and $m=3$ (LOBPCG type). So, the specialized algorithm for detecting definite pairs is given as Algorithm~\ref{alg:det def}. It is an interior eigensolver for computing $k_+$ smallest $B$-positive eigenvalues and $k_-$ largest $B$-negative eigenvalues, and the corresponding eigenvectors, if the given matrix pair $(A,B)$ is positive definite. It terminates as soon as the definiteness is confirmed, although the eigenpairs have not been computed to a reasonable accuracy. If the given $(A,B)$ is indefinite, Algorithm~\ref{alg:det def} determines indefiniteness within a finite number of iterations.

\begin{algorithm}
\footnotesize
\caption{A  subspace algorithm for detecting a definite matrix pair}
\label{alg:det def}
\begin{algorithmic}[1]
\REQUIRE Hermitian matrices $A$, $B\in\C^{n\times n}$ such that $B$ is indefinite; integers $k$, $k_\pm\geq 1$; an initial matrix
$U_0\in\C^{n\times k}$ s.t. $\rank\,U_0=k\geq k_++k_-$ and $(k_+,k_-,k_0)\leq \In(U_0^HBU_0)$, where $k_0=k-(k_++k_-)$; an integer $m=2$ or $m=3$; tolerances $\mathsf{tol}>0$ and $\mathsf{tol_{ind}}>0$.
\ENSURE (if definiteness  is detected) a matrix $X_{i+1}$, an interval $(\nu_{i+1;1}^-,\nu_{i+1;1}^+)$ and $\lambda_0=\nu_{i+1}$ such that $A-\lambda_0B$ is  definite.
\STATE\label{line1} $DEF(-1)=0$;  

\FOR{$i=0,1,2,\ldots$}
  \IF{the sufficient condition~\ref{line:suf1} or~\ref{line:suf2} for some column of $U_i$ is satisfied} \label{line2}
  \STATE STOP: $(A,B)$ is indefinite or close to an indefinite pair;\label{line3}
\ENDIF
 \STATE orthonormalize $U_i$; (or $B$-orthonormalize $U_i$; )\label{line6}
  \STATE check the definiteness of $(U_i^HAU_i,U_i^HBU_i)$;\label{line7}
  \IF{$(U_i^HAU_i,U_i^HBU_i)$ is indefinite}\label{line8}
    \STATE STOP: $\AB$ is indefinite.\label{line9}
  \ENDIF 
    \IF{$(U_i^HAU_i,U_i^HBU_i)$ is positive definite}\label{line11}
         \STATE $DEF(i)=1$;\label{line12}
    \ELSE
         \STATE $DEF(i) = -1$;\label{line14}
    \ENDIF
    \IF{$DEF(i)\cdot DEF(i-1)<0$}\label{line16}
         \STATE STOP: $\AB$ is indefinite.\label{line17}
    \ENDIF 
    \STATE Apply the Rayleigh-Ritz procedure on $(U_i^HAU_i,U_i^HBU_i)$ to get inner $k_-+k_+$ eigenpairs. Form $(D_{i+1},Y_{i+1})$ from~\eqref{eq: D i Y};
       \label{line19}
        \IF{$|\nu_{i+1;1}^+-\nu_{i+1;1}^-|<\mathsf{tol}$}\label{line23}
            \STATE STOP: $\AB$ is indefinite or close to an indefinite pair; \label{line24} 
        \ENDIF
            \STATE let $\nu_{i+1}=(\nu_{i+1;1}^-+\nu_{i+1;1}^+)/2$;\label{line26}
            \IF{$DEF(i)=1$ and $A-\nu_{i+1}B\succ 0$ }\label{line27}
               \STATE STOP: $\AB$ is  positive definite  and $\nu_{i+1}$ is the definitizing shift;\label{line28}
            \ELSIF{$DEF(i)=-1$ and $A-\nu_{i+1}B\prec 0$}    \label{line29}
               \STATE STOP: $\AB$ is  negative definite  and $\nu_{i+1}$ is the definitizing shift; \label{line30}
            \ENDIF 
            \IF{$m=2$ or ($m=3$ and $i=0,1$)} \STATE 
                  $X_{i+1}=U_i Y_{i+1}$;\, $P_0=[\,]$;\, $P_1=X_1$;
            \ELSIF{$m=3$ and $i>1$} \STATE  $P_{i}=U_i^{(2)} Y_{i+1}^{(2)},\qquad X_{i+1}=U_i^{(1)} Y_{i+1}^{(1)}+P_{i}$; (see~\eqref{eq: Y}--\eqref{eq: P})
            \ENDIF 
              \IF{preconditioner is used}  \label{line32}
              \STATE form $R_{i+1}=AX_{i+1}-BX_{i+1}D_{i+1}$ and $W_{i+1}=T_{i+1}\, R_{i+1}$ by solving linear systems for the preconditioner $T_{i+1}=(A-\nu_{i+1}B)^{-1}$; \label{line33}
              \ELSE  
                 \STATE $W_{i+1}= R_{i+1}$;\label{line35}
              \ENDIF
              \STATE  
                   $U_{i+1}=[X_{i+1}, W_{i+1}]$ for {$m=2$}; 
             $U_{i+1}=[X_{i+1}, W_{i+1}, P_{i}]$ for $m=3$;\label{line43}
\ENDFOR 

\end{algorithmic}
\end{algorithm}

\begin{remark}\label{rem: subspace alg} Some remarks about Algorithm~\ref{alg:det def} are as follows:
\begin{itemize}
\item[(a)] The user chooses $m=2$ or $m=3$, which determines the dimension of the subspace, which is $p=m(k_++k_-)$.
\item[(b)] For a given matrix $U\in\C^{n\times p}$ we check the sufficient condition in the following way (Lines~\ref{line2}--\ref{line3}):
   \begin{algorithmic}[1]
 \FOR{$j=1$,~$2$, $\dotsc$, $p$}
     \STATE form $d_1(\tilde{u})=\tilde{u}^H A\tilde{u}$, $d_2(\tilde{u})=\tilde{u}^H B\tilde{u}$,
     where $u$ is the $j$th column of $U$ and $\tilde{u}=u/\|u\|_2$; 
     \IF {$d_1(\tilde{u})=d_2(\tilde{u})=0$} \label{line:suf1}
           \STATE STOP: $\AB$  is indefinite.
     \ELSIF {$\sqrt{d_1^2(\tilde{u})+d_2^2(\tilde{u})}<\mathsf{tol_{ind}}$}\label{line:suf2}
        \STATE STOP: $\AB$  is indefinite or close to an indefinite pair. 
      \ENDIF
 \ENDFOR 
\end{algorithmic}

  If $d_1(\tilde{u})=d_2(\tilde{u})=0$, then $\AB$ is indefinite.  If   $\sqrt{d_1^2(\tilde{u})+d_2^2(\tilde{u})}\approx 0$, we  terminate our algorithm. The analysis in section~\ref{subsec: num stab} shows that $\AB$ is  close to an indefinite pair.
\item[(c)] If the length of the current approximation interval  $\mathcal{I}_{i+1}:=(\nu_{i+1;1}^-,\nu_{i+1;1}^+)$ is very small (Lines~\ref{line23} and~\ref{line24}), this signals that in the case of the given definite $\AB$, the definiteness interval  is even smaller by Corollary~\ref{cor: interlacing}. 
Therefore, iterating further will not provide any more useful information if the tolerance  is approximately the maximum of the uncertainty in the 
input and   the unit   roundoff. So,  the algorithm terminates by declaring that the given matrix pair is  indefinite or close to an indefinite one (for the latter, see the analysis in section~\ref{subsec: num stab}).
  \item[(d)]   If $A-\nu_{i+1}B\succ 0$ ($A-\nu_{i+1}B\prec 0$) for some real $\nu_{i+1}$, then $\AB$ is positive (negative) definite by  definition (Lines~\ref{line27}--\ref{line30}). The positive (negative) definiteness of $A-\nu_{i+1}B$ is easily checked by computing Cholesky factorization, which is a numerically stable algorithm~\cite{HIGHAM1988103}. Notice that if the Cholesky factorization is done, then Algorithm~\ref{alg:det def} ends. Thus, we obtain only one complete Cholesky factorization, while in all previous iterations the factorization breaks down, possibly at an early stage.
  \item[(e)] Using indefinite preconditioners (Line~\ref{line32}), not necessarily in every iteration, can accelerate the detection of defi\-ni\-te\-ness, especially for $m=2$; see Example~\ref{ex: gen_hyper2} where definite matrix pairs are close to indefinite pairs. The numerical experiments show that the algorithms from~\cite{KressnerMPandurShao2013,MPandur2019} with one preconditioner converge much slower than with two preconditioners. Nevertheless, the aim of our algorithm is not to compute the exact boundaries of the definiteness interval, but rather to obtain a crude appro\-ximation of it, such that the midpoint of the approximate interval lies within the definiteness interval. Therefore, it suffices to use one preconditioner in our algorithm, if any is used.
  \item[(f)] The algorithm terminates when the iteration index  $i$
 reaches a maximum value, e.g., $i_{\max}=100$. In that case, we can run Algorithm~\ref{alg:det def} again  with another initial matrix of the same rank, or we can choose  subspaces with a larger dimension than in the previous case. The benefit of choosing a larger $p$ is demonstrated in the experiments in section~\ref{sec:exper}.\hfill{$\diamond$}
\end{itemize} 
\end{remark}

\subsubsection{Implementation issues}

We now give some remarks on the implementation of Algorithm~\ref{alg:det def}.

\textbf{Initial guess.} An initial guess~$U_0\in\C^{n\times k}$ is such that $\rank \,U_0=k$ and
$
\In\bigl(U_0^H BU_0\bigr) \geq (k_+,k_-,k_0) \ge (1,1,0),
$ where the inequalities are understood element-wise.
By induction from~\cite[Lemma 2.5]{KressnerMPandurShao2013} the inequality  $k_\pm \le p_\pm^{(i)}$  for $\In\big(U_i^H B U_i\big) =
(p_+^{(i)},p_-^{(i)},p_0^{(i)})$  holds for all iterations. 
 Choosing such an initial guess is quite straightforward when $B$ has a particular structure, such as in Example~\ref{ex: gen_hyper2}, or when $B$ is diagonal. If $B$ is not diagonal, but has at least one negative diagonal element, say $b_{jj}$, and at least one positive diagonal element, say $b_{ii}$, then $e_j$ and $e_i$ are $B$-negative and  $B$-positive vectors, respectively. So, we can form $U_0=[e_j,e_i]$ for $k=2$. Here, $e_i$ is the $i$-th column of $I_n$. 
If $A$  has at least one negative diagonal element,  and at least one positive diagonal element, then we can form an appropriate initial guess to apply Algorithm~\ref{alg:det def} to  $(B,A)$ which has the same eigenvectors as $(A,B)$ and its eigenvalues are reciprocal of the eigenvalues of $(A,B)$ (a zero eigenvalue corresponds to an infinite eigenvalue).
For general $B$,  we  perform a Hermitian indefinite decomposition 
\begin{equation}\label{eq: GJG}
    P^HBP=GJG^H,\quad J=\diag(j_{11},\ldots,j_{nn}),
\end{equation}
 where $P$ is a permutation matrix. If $B$ is nonsingular, then  $G$ is a lower block-triangular matrix with $1\times 1$ or $2\times 2$ diagonal blocks, and $J$ contains the inertia of $B$, i.e.\, $j_{ii}\in\{-1,1\}$, for $i=1\ldots,n$ (e.g.~\cite{BunchKaufmann1977,BunchMarcia2006,BunchParlett1971} and \cite{Slapnicar1998}). If $B$ is singular and $\rank(B)=r<n$, then $G$ from~\eqref{eq: GJG} is a lower block-trapezoidal $n\times r$ matrix of full column rank. In this case, $J$ is of order $r$ and contains  signs of nonzero eigenvalues of $B$. Then for $0\neq x\in \Cn$, and assuming $J=\diag(I_{n_+},-I_{n_-})$ we have
\begin{align}
x^HBx&=x^HGJG^Hx=(G^Hx)^HJ(G^Hx)=\qquad (y:=G^Hx)\nonumber\\
&=y^HJy=|y_1|^2+\ldots+|y_{n_+}|^2-|y_{n_++1}|^2-\ldots-|y_{n_++n_-}|^2.\label{eq:B poz neg stupci}
\end{align}
For any $0\neq y\in\mathbb{C}^{n_++n_-}$  such that $y^HJy>0$ in~\eqref{eq:B poz neg stupci}   solving the linear system $G^Hx=y$ gives a $B$-positive vector $x$. 
For example, if we want  $\In(U_0^HBU_0)=(1,1,0)$ we choose two linearly independent vectors $y$ and $z$ (for example, two columns of $I_n$) such that $y^HJy<0$  and $z^HJz>0$. After solving  linear systems $G^Hx_1=y$ and $G^Hx_2=z$, we get a $B$-negative vector $x_1$ and $B$-positive vector $x_2$ and form $U_0=[x_1,x_2]$ of full column rank. Since, 
$$\displaystyle \det U_0^HBU_0=\underbrace{x_1^HBx_1}_{<0}\underbrace{x_2^H Bx_2}_{>0}-\underbrace{|x_1^HBx_2|^2}_{\geq 0}\neq 0,$$
the matrix $U_0^HBU$ is nonsingular and indefinite.

 \textbf{Basis matrix.} Instead of using $X_{i-1}$ in~\eqref{eq: subspace basis} we use another matrix $P_{i-1}$ for numerical stability when $m=3$ because $X_{i-1}$ and $X_{i}$ tend to have linearly dependent columns as $i$ increases. For this purpose, the $3k\times k$ matrix $Y_{i+1}$ returned by the Rayleigh-Ritz procedure is partitioned as
  \begin{equation}\label{eq: Y}
      Y_{i+1} =
  \begin{bmatrix}
     Y_{i+1} ^{(1)}\\
      Y_{i+1} ^{(2)}
  \end{bmatrix},\quad  Y_{i+1} ^{(1)}\in\C^{k\times k},\quad Y_{i+1} ^{(2)}\in\C^{2k\times k}, 
  \end{equation}
  
and $U_i=[U_i^{(1)},U_i^{(2)}]$, $U_i^{(1)}\in\C^{n\times k}$, $U_i^{(2)}\in\C^{n\times 2k}$. Then the update
  \begin{equation}\label{eq: P}
  P_{i}=U_i^{(2)} Y_{i+1}^{(2)},\qquad X_{i+1}=U_i^{(1)} Y_{i+1}^{(1)}+P_{i},
  \end{equation}
yields a new basis  $U_{i+1}=[X_{i+1},W_{i+1},P_{i}]$  that in exact arithmetic spans the same space as the natural basis. Then we orthonormalize in the standard inner product on $\C^n$ or  $B$-orthonormalize $U_{i+1}$  in Line~\ref{line6} (for details see~\cite[Section~3]{KressnerMPandurShao2013,MPandur2019}).\footnote{We call the set $\{x_1,\ldots,x_p\}$ a $B$-orthonormal basis for the subspace $\mathcal{U}$ if it is a basis for  $\mathcal{U}$ and if $x_i$ and $x_j$ are $B$-orthonormal vectors for every $i\neq j$, where $i,j=1,\ldots,p$.} The results of performing Algorithm~\ref{alg:det def} with both types of orthonormalization are very similar in experiments from section~\ref{sec:exper}.

\textbf{Checking projected pairs.}
 We choose $p$ such that $p\ll n$, so $(A_i,B_i):=$ $(U_i^HAU_i,U_i^HBU_i)$ is a small pair with dense matrices.  In our algorithm, we need to detect whether a given projected pair is definite (Line~\ref{line7}), and if it is definite to compute eigenpairs around the definiteness interval (Line~\ref{line19}). To  fulfill this double duty, we apply the two-sided $J$-Jacobi method~\cite{SINGER20125704,Veselic1993} on 
 the equivalent Hermitian pair $(G_i^{-1}A_iG_i^{-H},J_i)$, where $G_iJ_iG_i^H$ is a Hermitian indefinite decomposition of the nonsingular matrix $B_i\neq \diag (\pm 1)$ such that $J_i=\diag(I_{p_+},I_{p_-})$ and $\In(B_i)=(p_+,p_-,0)$~\cite{Slapnicar1998}.  In case of  definite GEP $G_i^{-1}A_iG_i^{-H}y=\nu J_i y$,  the $J$-Jacobi method returns a diagonal matrix with eigenvalues of $(A_i,B_i)$ on its diagonal, and a corresponding matrix of eigenvectors $[\ldots y\ldots]$. The eigenvectors of  $(A_i,B_i)$ are then given by $x=G_i^{-H}y$. If $B_i=\diag(\pm 1)$, we directly apply the $J$-Jacobi method to $(A_i,B_i)$. If we $B$-orthonormalize $U_i$, in theory we have $B_i=\diag(\pm 1)$, but rounding errors will probably produce something instead of zero outside the diagonal of $B_i$. 
 For singular $B_i$ we use an auxiliary pair $(B_i,A_i-\mu B_i)$ with a nonsingular matrix $A_i-\mu B_i$~\cite[Remark 2.5]{MMPthesis}. 
 There are also some other possibilities for checking the definiteness of the projected pairs, but the method that uses $J$-Jacobi works very well in our numerical experiments. 



\section{Analysis of  Algorithm~\ref{alg:det def}}\label{subsec: analysis}
In this section, we analyze the operation count, convergence, and numerical stability of Algorithm~\ref{alg:det def}.

\subsection{On complexity and convergence analysis}
 The  maximum operation count of the $i$-th iteration  of Algorithm~\ref{alg:det def} for $i\geq 1$  applied to a Hermitian matrix pair of order $n$ with dense matrices is given in Table~\ref{tab:count}. For sparse matrices, the cost heavily depends on the sparsity structure of the matrices. For structured matrices, forming the preconditioned residual matrix can be done efficiently by direct solvers; e.g., using sparse LU. To reduce cost, one can also apply iterative methods for inexact solutions, such as GMRES. Checking definiteness of a small projected pair and the attempted Cholesky factorization on an indefinite matrix can terminate in early stages, leading to a low computational cost (see Example~\ref{exm:ispitivanje clement} from section~\ref{sec:exper}). 

\begin{table}[ht]
\footnotesize
\caption{The maximum operation count of $i$-th iteration of Algorithm~\ref{alg:det def}, $i\geq 1$, applied to a Hermitian matrix pair of order $n$ with dense matrices. Here,  $k=k_-+k_+\geq 2$, $p=2k$ for $m=2$, and $p=3k$ for $m=3$. We assume $p\ll n$.}
\label{tab:count}
\begin{center}
\begin{tabular}{@{}l|l@{}}
\toprule
 Step & Cost (flops)\\ \midrule
 Sufficient condition & $p(3n^2/2+3n/2-1)$\\
 Orthonormalize or  $B$-orthonormalize $U_i$ & $O(2p^2n)$ or $O(2pn^2)$\\
 Computing projected matrices $(U_i^HAU_i,U_i^HBU_i)$ & $p(4n^2-2n)+O(n)$  \\
 Checking definiteness of $(U_i^HAU_i,U_i^HBU_i)$ & $O(p^3)$\\
 Forming $X_{i+1}$ & $2kpn-kn$\\
 Cholesky for $A-\nu_{i+1}B$ & $O(n^3/3)$\\
 Forming $W_{i+1}$ & $O(2n^3/3)$ with LU or $O(2kt_{max}n^2)$ \\
  &  with maximal $t_{max}$ steps for GMRES\\
\bottomrule
\end{tabular}
\end{center}
\end{table}

 The following proposition ensures a monotonicity property  of Algorithm~\ref{alg:det def}. The key part is to use the computed Ritz vectors in a new search subspace. 
\begin{proposition}[Monotonicity]\label{le:mon.Ritz} If the $i$-th step, $i\geq 1$, of Algorithm~\ref{alg:det def} (for every $m\geq 2$) is executed and $DEF(i)=1$, 
then
\begin{align}
\nu_{i+1;j}^+\leq \nu_{i;j}^+\quad \textrm{for}\quad j=1,\ldots, k_+,\label{eq:monot +}\\
\nu_{i;j}^-\leq \nu_{i+1;j}^-\quad \textrm{for}\quad j=1,\ldots, k_-.\label{eq:monot -}
\end{align}
\end{proposition}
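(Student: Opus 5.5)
The plan is to compare the Ritz values of two consecutive projected pairs through a variational characterization of the $B$-positive and $B$-negative eigenvalues of a positive definite pair. We then use that the new search space $\Span U_i$ contains the Ritz vectors $X_i=U_{i-1}Y_i$ from the previous step. Since $DEF(i)=1$, the pair $(U_i^HAU_i,U_i^HBU_i)$ is positive definite. The step $i-1$ must also have produced a positive definite projected pair: otherwise $DEF(i-1)=-1$ and the algorithm would have stopped at Line~\ref{line17}. For $i=1$ we use $DEF(0)$ and the same reasoning. So both projected pairs have the ordering~\eqref{eq:ordertheta}, and the Ritz values $\nu_{i;j}^\pm$ are well defined.

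The first key step is to observe that $\Span X_i\subseteq \Span U_i$. This holds for every $m\geq 2$, since $X_i$ is the first block of $U_i$ (for $m=3$ we have $\Span[X_i,W_i,P_{i-1}]$, whose first block is again $X_i$). Write $X_i=U_iZ$ with $Z$ of full column rank $k$. The columns of $X_i$ are Ritz vectors of $(U_{i-1}^HAU_{i-1},U_{i-1}^HBU_{i-1})$, and they are $B$-orthonormal with
\[
X_i^HBX_i=\diag(-I_{k_-},I_{k_+}),\qquad X_i^HAX_i=D_i\,\diag(-I_{k_-},I_{k_+}),
\]
up to the sign convention. Hence the pair $(X_i^HAX_i,X_i^HBX_i)$ is diagonal, with eigenvalues exactly $\nu_{i;j}^\pm$ for $j\le k_\pm$.

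The second step is to apply Theorem~\ref{tm:interlacing}, treating $(U_i^HAU_i,U_i^HBU_i)$ as the positive definite "big" pair and $Z$ as the projection matrix. Its projection $(Z^HU_i^HAU_iZ,Z^HU_i^HBU_iZ)=(X_i^HAX_i,X_i^HBX_i)$ has inertia $(k_+,k_-,0)$. The left inequalities in~\eqref{eq:inter1} and~\eqref{eq:inter2} then give, for the big pair's eigenvalues $\nu_{i+1;j}^\pm$ and the small pair's eigenvalues $\nu_{i;j}^\pm$,
\[
\nu_{i+1;j}^+\le \nu_{i;j}^+,\ j\le k_+,\qquad \nu_{i+1;j}^-\ge \nu_{i;j}^-,\ j\le k_-.
\]
These are exactly~\eqref{eq:monot +} and~\eqref{eq:monot -}.

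The main obstacle is bookkeeping rather than analysis. First, one must verify that the Ritz vectors in $X_i$ indeed diagonalize the restricted pair with exactly the eigenvalues $\nu_{i;j}^\pm$; this uses the $B$-orthogonality of eigenvectors of the definite projected pair for distinct eigenvalues, with a chosen $B$-orthonormal basis in the multiple case. Second, one must check that orthonormalization or $B$-orthonormalization in Line~\ref{line6}, and the $P_i$ reformulation for $m=3$, do not change $\Span U_i$. Both facts are immediate from the construction. Finally, the case of a singular $U_i^HBU_i$ is harmless, since Theorem~\ref{tm:interlacing} allows $p_0>0$ and only the lower bounds are used.
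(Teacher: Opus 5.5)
Your proof is correct and takes essentially the same route as the paper: both use $\Span X_i\subseteq\Span U_i$, view $(X_i^HAX_i,X_i^HBX_i)$ as a projection of the positive definite pair $(U_i^HAU_i,U_i^HBU_i)$ whose eigenvalues are exactly $\nu_{i;j}^\pm$, and conclude with the left-hand interlacing inequalities of Theorem~\ref{tm:interlacing}. Your version is slightly more careful than the paper's, since writing $X_i=U_iZ$ with a general full-rank $Z$ (instead of $U=[e_1,\ldots,e_r]$) is robust to the (B-)orthonormalization in Line~\ref{line6}, and you explicitly check $DEF(i-1)=1$. One small slip: a singular $U_i^HBU_i$ is covered by the theorem allowing $n_0>0$ for the large pair, not by $p_0>0$, since your projected matrix $X_i^HBX_i$ is nonsingular.
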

\begin{proof} Let $U_{i-1}=[X_{i-1},Z_{i-1}]$, $X_{i-1}\in\C^{n\times r}$, $Z_{i-1}\in\C^{n\times r(m-1)}$, $r=k_-+k_+$. Hence for $j=1,\ldots,k_\pm$, $(\nu_{i;j}^\pm,y_{i;j}^\pm)$ are inner  eigenpairs of definite $U_{i-1}^H\AB U_{i-1}$ and $X_i$ is the corresponding matrix of Ritz vectors. Therefore, $U_{i}=[X_{i},Z_{i}]$ and $\nu_{i+1;j}^\pm$ for $j=1,\ldots,k_\pm$ are inner  eigenvalues of definite  $(U_{i}^HAU_i,U_i^H B U_{i})$. For $U=[e_1,\ldots,e_{r}]$, the pair
$$U^H(U_i^HAU_i,U_i^HBU_i)U=(X_i^HAX_i,X_i^H B X_i)$$
is a projected  pair of definite pair $(U_i^HAU_i,U_i^HBU_i)$. Since $\nu_{i;j}^\pm$ are inner  eigenvalues of  definite  $(X_i^HAX_i,X_i^H B X_i)$, by using the Cauchy-type interlacing inequalities we obtain~\eqref{eq:monot +} and \eqref{eq:monot -}.
\end{proof}

 Therefore, when we apply  Algorithm~\ref{alg:det def} to any matrix pair, definite or not, the intervals $(\nu_{i+1;1}^-,\nu_{i+1;1}^+)$ shrink as $i$ increases. Moreover, if Algorithm~\ref{alg:det def} is used with parameters $k=2$ and $k_\pm=1$ (the so-called “vector version”) on a positive definite matrix pair, and the procedure is continued even after definiteness has been detected, then the sequences $\{\nu_{i+1;1}^+\}$ and $\{\nu_{i+1;1}^-\}$ will converge to certain eigenvalues of the given matrix pair, as established in~\cite[Theorem~2.1]{BennerLiang2016}. The sequences  $\{\nu_{i+1;1}^+\}$ and  $\{\nu_{i+1;1}^-\}$ converge to some $B$-positive and $B$-negative eigenvalue, respectively, by Theorem~\ref{tm:interlacing}.  But convergence to the extreme  eigenvalues $\lambda_1^\pm$ is guaranteed only if $\lambda_1^+\leq\nu_{i+1;1}^+<\lambda_2^+$ for some $i$ and  $\lambda_2^-<\nu_{i'+1;1}^-\leq \lambda_1^-$ for some $i'$. 
Therefore, the sequence of definiteness intervals associated with the definite projected pairs converges to an interval that contains the definiteness interval of the original definite matrix pair. Consequently, we can expect Algorithm~\ref{alg:det def} to terminate very quickly when the definiteness interval of the original matrix pair is relatively large.
 
 After detecting definiteness, the final output of Algorithm~\ref{alg:det def} (a definitizing shift $\lambda_0$, an interval $(\nu_{i+1;1}^-,\nu_{i+1;1}^+)$, a matrix $X_{i+1}$) could be used as an input of the algorithms from~\cite{KressnerMPandurShao2013,MPandur2019}:
\begin{itemize}
\item[(a)] a definitizing shift is used to form one positive definite preconditioner in  algorithms from~\cite[Algorithm 1]{KressnerMPandurShao2013},~\cite{MPandur2019};
\item[(b)] since the final interval $(\nu_{i+1;1}^-,\nu_{i+1;1}^+)$ is a crude approximation of the definiteness interval of a given definite matrix pair, we could pick two scalars near the boundaries of $(\nu_{i+1;1}^-,\nu_{i+1;1}^+)$ and use them to form two possibly excellent preconditioners in  algorithms from~\cite[Algorithm 2]{KressnerMPandurShao2013},~\cite{MPandur2019};
\item[(c)] the final computed Ritz vectors given as the columns of a  matrix $X_{i+1}$  could serve as an initial input in algorithms from~\cite{KressnerMPandurShao2013,MPandur2019}.
\end{itemize}
Convergence rate estimates for the sequences $\{\nu_{i;1}^\pm\}$ generated by the algorithms of~\cite{KressnerMPandurShao2013,MPandur2019} with parameters $k=2$ and $k_\pm=1$ can be found in~\cite[Theorems~4 and~5]{BennerLiang2016}. See also the sharp convergence results provided in~\cite[Theorems~4.2 and~4.3]{MPandur2019}.
\subsection{A modification of Algorithm~\ref{alg:det def}}\label{rem: neg curv}
 If the Cholesky factorization of $A-\nu_{i+1} B$ breaks down, we can compute a nonzero vector $z$ such that \begin{equation}\label{eq: curv}
    z^H(A-\nu_{i+1} B)z\leq 0
\end{equation}  
using the partial factorization~\cite[Section 3]{Tisseur2010}. Such a vector  $z$ is called a direction of negative  curvature. It can help to further reduce the length of the current interval $(\nu_l,\nu_u):=\mathcal{I}_{i+1}$ in Algorithm~\ref{alg:det def} for possibly positive definite $(A,B)$. Following~\cite{More1993}, we show how to update $(\nu_l,\nu_u)$ given $\nu_{i+1}\in (\nu_l,\nu_u)$:
\begin{itemize}
\item[(a)] if $z^HBz<0$, then $$z^HAz-\nu z^HBz< z^HAz-\nu_{i+1} z^HBz\leq 0,\quad \textrm{for}\,\,\nu<\nu_{i+1}$$
and therefore $\nu_l:=\nu_{i+1}$;
\item[(b)] if $z^HBz>0$, then $$z^HAz-\nu z^HBz< z^HAz-\nu_{i+1} z^HBz\leq 0,\quad \textrm{for}\,\,\nu>\nu_{i+1}$$
and therefore $\nu_u:=\nu_{i+1}$.
\end{itemize}
If the indefiniteness is not detected in the next iteration,  we intersect this updated interval with the definiteness interval   $\mathcal{I}_{i+2}=(\nu_{i+2;1}^-,\nu_{i+2;1}^+)$ from the next iteration, and choose $\nu_{i+2}$ from that intersection. If we choose each $\nu_{i+1}$ as the middle of the interval, then Algorithm~\ref{alg:det def} with this modification in updating the intervals is a bisection like algorithm, so it converges at least linearly.

Therefore, Algorithm~\ref{alg:det def} detects definiteness or (near) indefiniteness with\-in a finite number of iterations.  We do not write this modification in Algorithm~\ref{alg:det def} as a new algorithm for the sake of brevity.

 A direction of negative curvature $z$, the value of $z^H(A-\nu_{i+1}B)z$, the effects of rounding errors (the correctness of the inequality~\eqref{eq: curv} for a computed $z$ in floating point arithmetic) depend on the way how the attempted Cholesky factorization is made, without  pivoting or with some kind of pivoting, e.g.,~\cite[equations (3.3) or (3.5)]{Tisseur2010}.  The attempted Cholesky factorization of  an indefinite matrix may terminate faster when using pivoting than without pivoting,  resulting in a smaller numerical cost (so called ``early-exit complete pivoting''~\cite{Tisseur2010}).   

\subsection{Numerical stability}\label{subsec: num stab}
 Algorithm~\ref{alg:det def} uses multiple determination criteria.  The following theorems give the determination criteria under which our algorithm is backward stable.

 \begin{theorem}
 Whenever a determination of  Algorithm~\ref{alg:det def} is made  by finding a vector $z$ such that $\|z\|_2=1$ and $z^HAz=z^HBz=0$ (Lines~\ref{line2}-\ref{line3}),  or when the Cholesky factorization is successfully completed (Lines~\ref{line27}-\ref{line28} or~\ref{line29}-\ref{line30}), it is numerically stable in the sense that it is correct for a Hermitian perturbed pair $(A+\Delta A, B+\Delta B)$ with
\begin{equation}\label{eq: perturbed pair}
    \|[\Delta A\,\,\Delta B]\|_2\leq c_n\, \varepsilon \|[A\,\,B]\|_2,
\end{equation}
where $c_n$ is a modest constant, and $\varepsilon$ is the unit roundoff. 
 \end{theorem}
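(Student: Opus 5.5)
The theorem covers two kinds of determination, and I would treat them separately. Each time the goal is to exhibit a Hermitian perturbation satisfying \eqref{eq: perturbed pair} for which the decision taken by Algorithm~\ref{alg:det def} is exactly correct.

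\emph{Indefiniteness via $z$.} Suppose the algorithm finds a unit vector $z$ (a normalized column $\tilde u$ of $U_i$) with computed values $\mathrm{fl}(z^HAz)=\mathrm{fl}(z^HBz)=0$. Standard rounding-error analysis of the quadratic form $z^HAz$, computed as an inner product of $z$ with $Az$, gives that the computed value equals $z^H(A+E_A)z$ with $E_A$ Hermitian and $\|E_A\|_2\le c_n\varepsilon\|A\|_2$; the same holds for $B$. I would also absorb the error in normalizing $z$ into this perturbation.

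Next I would set $\Delta A=-(z^H(A+E_A)z)\,zz^H+E_A$, or more simply the rank-one correction $\Delta A=-(z^HAz)zz^H$. Since $|z^HAz|=|z^HE_Az|\le c_n\varepsilon\|A\|_2$, this $\Delta A$ is Hermitian, has norm at most $c_n\varepsilon\|A\|_2$, and gives $z^H(A+\Delta A)z=0$ exactly. Doing the same for $B$ yields $z^H(A+\Delta A)z=z^H(B+\Delta B)z=0$. The sufficient condition from the end of Section~\ref{sec:det def subspaces} then shows that $(A+\Delta A,B+\Delta B)$ is indefinite. Finally, $\max(\|\Delta A\|_2,\|\Delta B\|_2)\le c_n\varepsilon\max(\|A\|_2,\|B\|_2)$, and the bounds $\|[\Delta A\ \Delta B]\|_2\le\sqrt2\max\|\cdot\|$ and $\|A\|_2,\|B\|_2\le\|[A\ B]\|_2$ give \eqref{eq: perturbed pair} after adjusting $c_n$.

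\emph{Definiteness via Cholesky.} If the computed Cholesky factorization of $C=A-\nu_{i+1}B$ (or of $-C$) completes, I would invoke Higham's backward stability result~\cite{HIGHAM1988103}. It states that $\hat R^H\hat R=\hat C+E$, where $\hat C=\mathrm{fl}(A-\nu_{i+1}B)$ and $E$ is Hermitian with $\|E\|_2\le c_n\varepsilon\|\hat C\|_2$. Because $\hat R$ has a positive diagonal, $\hat R^H\hat R\succ0$. Forming $\hat C$ entrywise introduces a Hermitian error $F$ with $|F|\le\varepsilon(|A|+|\nu_{i+1}||B|)$. Hence $\hat R^H\hat R=A-\nu_{i+1}B+G$ with $\|G\|_2\le c_n\varepsilon(\|A\|_2+|\nu_{i+1}|\|B\|_2)$.

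I now need to distribute $G$ between $\Delta A$ and $\Delta B$ so that $\Delta A-\nu_{i+1}\Delta B=G$, and this is the main obstacle: the factor $|\nu_{i+1}|$ must not spoil the bound. I would split into two cases.
\begin{itemize}
\item If $|\nu_{i+1}|\le1$, take $\Delta A=G$ and $\Delta B=0$. Then $\|\Delta A\|_2\le 2c_n\varepsilon\|[A\ B]\|_2$.
\item If $|\nu_{i+1}|>1$, take $\Delta A=0$ and $\Delta B=-G/\nu_{i+1}$. Then $\|\Delta B\|_2\le c_n\varepsilon(\|A\|_2/|\nu_{i+1}|+\|B\|_2)\le 2c_n\varepsilon\|[A\ B]\|_2$.
\end{itemize}
In both cases $(A+\Delta A)-\nu_{i+1}(B+\Delta B)=\hat R^H\hat R\succ0$. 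The perturbed pair is therefore positive definite with definitizing shift $\nu_{i+1}$, so the decision is correct for it. The negative definite case follows by applying the same argument to $-(A-\nu_{i+1}B)$.

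Both perturbations are Hermitian by construction, which completes the plan. The delicate points are keeping every perturbation Hermitian and making sure the constants depend only on $n$. The $\nu$-splitting trick above is what makes the bound independent of $\nu_{i+1}$.
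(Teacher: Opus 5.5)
Your proof is correct. It reaches the result by a more self-contained route than the paper, and it handles the shift differently. The paper gives no direct argument for either criterion. It cites the backward error analysis of Guo, Higham and Tisseur~\cite{Tisseur2010}, and to make that analysis apply it replaces the test matrix $A-\nu_iB$ by the rescaled combination $A\sin t_i+B\cos t_i=(A-\nu_iB)/\sqrt{1+\nu_i^2}$. Since $(\sin t_i,\cos t_i)$ is a unit vector, the Cholesky backward error $\Delta C$ splits as $\Delta A=\sin t_i\,\Delta C$, $\Delta B=\cos t_i\,\Delta C$. This gives $\|[\Delta A\,\,\Delta B]\|_2=\|\Delta C\|_2\le c_n\varepsilon\|[A\,\,B]\|_2$ with no case distinction.

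You instead keep the unscaled $A-\nu_{i+1}B$ and control the factor $|\nu_{i+1}|$ by putting the whole error $G$ either on $A$ or, divided by $\nu_{i+1}$, on $B$. You also check the $z$-criterion by explicit rank-one corrections rather than by citation. The paper's route is shorter and reuses a published analysis. Yours needs no external argument and covers the Cholesky test exactly as Algorithm~\ref{alg:det def} performs it, whereas the paper's proof is stated for the rescaled test.

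You can drop the case split by carrying the unit-circle idea over to the unscaled matrix. Take $\Delta A=G/(1+\nu_{i+1}^2)$ and $\Delta B=-\nu_{i+1}G/(1+\nu_{i+1}^2)$. Then $\Delta A-\nu_{i+1}\Delta B=G$, and $\|[\Delta A\,\,\Delta B]\|_2=\|G\|_2/\sqrt{1+\nu_{i+1}^2}\le\sqrt{2}\,c_n\varepsilon\|[A\,\,B]\|_2$.

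One small imprecision: standard rounding analysis gives $\mathrm{fl}(z^HAz)=(z+\delta z)^H(A+\delta A)z$, not directly $z^H(A+E_A)z$ with Hermitian $E_A$. All you actually use, though, is $|z^HAz|\le c_n\varepsilon\|A\|_2$ when the computed value is zero. That follows from the forward bound $|\mathrm{fl}(z^HAz)-z^HAz|\le c_n\varepsilon\,|z|^T|A|\,|z|$ together with $\|\,|A|\,\|_2\le\sqrt{n}\,\|A\|_2$.
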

 \begin{proof}   The proof is given in~\cite{Tisseur2010} using backward error analysis. More precisely, if we test the definiteness of $$A\sin t_i  +B\cos t_i :=A\frac{1}{\sqrt{1+\nu_i^2}}+B\frac{-\nu_i}{\sqrt{1+\nu_i^2}}$$
instead of $A-\nu_i B$ using the Cholesky factorization, then we can directly  use the argument from~\cite[Section 2]{Tisseur2010}.
 \end{proof}
 \begin{theorem}
 When  a determination of  Algorithm~\ref{alg:det def} with standard orthonormalization is made by finding some indefinite projected pair (Lines~\ref{line8}-\ref{line9}), it is numerically stable in the sense that it is correct for a Hermitian perturbed pair $(A+\Omega A, B+\Omega B)$ with
\begin{equation}\label{eq: perturbed pair2}
   \max\{ \|\Omega A\|_2,\|\Omega B\|_2\}\leq c_p \,\varepsilon  \max\{ \|A\|_2,\|B\|_2\},
\end{equation}
where $c_p$ is a modest constant, and $\varepsilon$ is the unit roundoff.    
 \end{theorem}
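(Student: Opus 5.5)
The plan is a standard backward error argument: absorb the rounding errors of orthonormalization, projection and the small-pair definiteness test into a perturbation of $(A,B)$, and then apply Theorem~\ref{tm:interlacing} in contrapositive form to the perturbed pair.

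\textbf{Step 1: the orthonormalization.} Let $\widehat U\in\C^{n\times p}$ be the computed basis after standard orthonormalization in Line~\ref{line6}, for instance by Householder QR. Then there is a matrix $Q$ with exactly orthonormal columns such that $\|\widehat U-Q\|_2\le c_1\varepsilon$.

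\textbf{Step 2: the projected matrices.} The computed matrices $\widehat A_p=fl(\widehat U^HA\widehat U)$ and $\widehat B_p=fl(\widehat U^HB\widehat U)$ satisfy
\[
\widehat A_p=Q^HAQ+E_A,\qquad \widehat B_p=Q^HBQ+E_B,
\]
with $\|E_A\|_2\le c_2\varepsilon\|A\|_2$ and $\|E_B\|_2\le c_2\varepsilon\|B\|_2$. This follows from standard matrix-multiply error bounds together with $\|\widehat U\|_2\le 1+c_1\varepsilon$.

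\textbf{Step 3: the small-pair test.} The definiteness test for the $p\times p$ pair is done by $J$-Jacobi after an indefinite factorization. It is backward stable at dimension $p$, so its verdict ``indefinite'' is exact for a nearby Hermitian pair $(\widehat A_p+F_A,\widehat B_p+F_B)$ with $\|F_{A}\|_2,\|F_B\|_2\le c_3\varepsilon\max\{\|A\|_2,\|B\|_2\}$. This is the step I expect to be the main obstacle. It relies on the backward stability of $J$-Jacobi and of the Bunch--Kaufman-type factorization, and I would cite the literature for it rather than reprove it, assuming the constants depend only on $p$. Combining Steps 2 and 3, the pair
\[
\bigl(Q^HAQ+\widetilde E_A,\;Q^HBQ+\widetilde E_B\bigr),\qquad \widetilde E_A=E_A+F_A,\quad \widetilde E_B=E_B+F_B,
\]
is exactly indefinite, and both perturbations are Hermitian and of size $O(p\,\varepsilon)\max\{\|A\|_2,\|B\|_2\}$.

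\textbf{Step 4: lifting the perturbation to $\C^{n\times n}$.} Set $\Omega A=Q\widetilde E_AQ^H$ and $\Omega B=Q\widetilde E_BQ^H$. These are Hermitian, and since $Q$ has orthonormal columns, $\|\Omega A\|_2=\|\widetilde E_A\|_2$ and $\|\Omega B\|_2=\|\widetilde E_B\|_2$, so the bound \eqref{eq: perturbed pair2} holds with $c_p=c_1+c_2+c_3$ (up to modest factors). Moreover $Q^H(A+\Omega A)Q=Q^HAQ+\widetilde E_A$, and likewise for $B$, because $Q^HQ=I_p$. So the projected pair of $(A+\Omega A,B+\Omega B)$ onto $\Span Q$ is exactly the indefinite pair from Step 3.

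\textbf{Step 5: conclusion.} Apply Theorem~\ref{tm:interlacing} (every projection of a definite pair with full column rank $U$ is definite) in contrapositive form, using Remark~\ref{rem:definite pair} to cover both positive and negative definiteness. It follows that $(A+\Omega A,B+\Omega B)$ is indefinite, so the verdict of Lines~\ref{line8}--\ref{line9} is correct for this perturbed pair. The reason standard rather than $B$-orthonormalization is required is precisely so that $Q$ has orthonormal columns in the Euclidean sense. This makes the lift in Step 4 norm-preserving, which would fail for a $B$-orthonormal basis whose condition number can be large.
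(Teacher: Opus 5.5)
Your proposal is correct and follows the paper's proof in its essential step: you lift the small-pair backward error via $\Omega A = Q\widetilde E_A Q^H$, $\Omega B = Q\widetilde E_B Q^H$ with $Q$ having orthonormal columns, so that norms are preserved and $Q^H(A+\Omega A)Q$ is exactly the perturbed projected pair; the paper phrases the norm bound through $\diag(U,U)$, which is equivalent. Your Steps 1--2 and 5 go further than the paper, which simply assumes $U^HU=I_p$ exactly and a backward error $E,F$ bounded relative to $\max\{\|U^HAU\|_2,\|U^HBU\|_2\}$, and leaves the final interlacing (contrapositive) step implicit; your more careful accounting of the orthonormalization and projection errors is sound, at the price of a constant that also depends on $n$.
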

 \begin{proof}
   Assume that a projected perturbed pair $$(U^HAU+E,U^HBU+F),\,\textrm{where} \,\,E=E^H,\, F=F^H \,\textrm{and}\,\, U^HU=I_p,$$ is declared to be indefinite by some numerically stable algorithm such that
\begin{equation*}\label{eq: perturbed pair3}
    \max\{ \|E\|_2,\|F\|_2\}\leq c_p\, \varepsilon  \max\{ \|U^HAU\|_2,\|U^HBU\|_2\},
\end{equation*}
where $c_p$ is a modest constant, and $\varepsilon$ is the unit roundoff. Let $\mathcal{U}=\mathrm{diag}(U,U)$, so $\|\mathcal{U}\|_2=\|\mathcal{U}^H\|_2=1$.
For Hermitian perturbation matrices $\Omega A := UEU^H$ and $\Omega B := UFU^H$, the following holds
\begin{align*}
    \max\{ \|\Omega A\|_2,\|\Omega B\|_2\} &= \Big\|
    \begin{bmatrix}
        \Omega A& 0\\
        0& \Omega B
    \end{bmatrix}\Big \|_2=\Big\|
    \begin{bmatrix}
        UEU^H& 0\\
        0& UFU^H
    \end{bmatrix}\Big \|_2\\
    &= \Big\|\mathcal{U} \begin{bmatrix}
        E& 0\\
        0& F
    \end{bmatrix}\mathcal{U}^H\Big \|_2\leq \|\mathcal{U}\|_2 \Big\| \begin{bmatrix}
        E& 0\\
        0& F
    \end{bmatrix}\Big \|_2\|\mathcal{U}^H\|_2\\
    &= \Big\| \begin{bmatrix}
        E& 0\\
        0& F
    \end{bmatrix}\Big \|_2=\max\{ \|E\|_2,\|F\|_2\}\\
    &\leq c_p\, \varepsilon  \max\{ \|U^HAU\|_2,\|U^HBU\|_2\}\\
    &=c_p\, \varepsilon \Big\|
    \begin{bmatrix}
        U^HAU& 0\\
        0& U^HBU
    \end{bmatrix}\Big \|_2=c_p\, \varepsilon \Big\|
    \mathcal{U}^H\begin{bmatrix}
        A& 0\\
        0& B
    \end{bmatrix}\mathcal{U}\Big \|_2\\
    &\leq c_p\, \varepsilon \Big\|
    \begin{bmatrix}
        A& 0\\
        0& B
    \end{bmatrix}\Big \|_2= c_p\, \varepsilon  \max\{ \|A\|_2,\|B\|_2\}.
\end{align*}

 \end{proof}

\begin{theorem} Let $(A,B)$ be a definite matrix pair.
 Whenever a determination of  Algorithm~\ref{alg:det def} is made  by
 \begin{itemize}
     \item[a)]  finding a unit vector $u$ such that $\sqrt{(u^H Au)^2+(u^H Bu)^2}\leq \mathsf{tol_{ind}}$ (Lines~\ref{line2}-\ref{line3}), or 
     \item[b)]   the length $|\nu_{i+1;1}^+-\nu_{i+1;1}^-|$ of the definiteness interval  of some definite projected pair is smaller than  $\mathsf{tol}$ (Lines~\ref{line23}-\ref{line24}), such that $\|X^{-1}\|_2^2\ll \mathsf{tol}^{-1}$ holds for the eigenvector matrix $X$ of $(A,B)$~\cite{LiBai2013}[Lemma~3.8, item~1.],
 \end{itemize}
 where    $\mathsf{tol_{ind}}$ and  $\mathsf{tol}$  are positive tolerances, then the given matrix pair is close to an indefinite pair. More precisely, that matrix pair is within the distance $\mathsf{tol_{ind}}$ for the case a), and  $\frac{\mathsf{tol}}{2}\|X^{-1}\|_2^2$ for the case b), of an indefinite pair. In these cases, small perturbations in matrices $A$ and $B$ can cause the loss of definiteness. 
 \end{theorem}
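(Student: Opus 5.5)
For case a) I will give an explicit perturbation that is at most $\mathsf{tol_{ind}}$ and produces an indefinite pair. Take the unit vector $u$ from Lines~\ref{line2}--\ref{line3} and set $a=u^HAu$, $b=u^HBu$, so $\sqrt{a^2+b^2}\le \mathsf{tol_{ind}}$. Define the rank-one Hermitian perturbations $\Delta A=-a\,uu^H$ and $\Delta B=-b\,uu^H$. Then $u^H(A+\Delta A)u=a-a=0$ and $u^H(B+\Delta B)u=0$. By the sufficient condition in section~\ref{sec:det def subspaces} (equivalently, $\gamma(A+\Delta A,B+\Delta B)=0$ in~\eqref{eq: Crawford}), the perturbed pair is indefinite. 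For the size, $\|[\Delta A\,\,\Delta B]\|_2=\|u\,[-a u^H\,\,-b u^H]\|_2=\sqrt{a^2+b^2}\le\mathsf{tol_{ind}}$, since the row $[a u^H\,\,b u^H]$ has norm $\sqrt{a^2+b^2}$. If the distance is measured separately in each matrix, we still have $\max\{|a|,|b|\}\le \mathsf{tol_{ind}}$, so the bound holds in either sense. This case is routine.

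For case b) I will first relate the small projected interval to the true definiteness interval. Assume $(A,B)$ is positive definite; the negative case follows by passing to $(-A,-B)$. Corollary~\ref{cor: interlacing} gives $(\lambda_1^-,\lambda_1^+)\subseteq(\nu_{i+1;1}^-,\nu_{i+1;1}^+)$, hence $\lambda_1^+-\lambda_1^-<\mathsf{tol}$.

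Next I construct a perturbation that merges $\lambda_1^-$ and $\lambda_1^+$. Let $X$ diagonalize the pair by congruence, $X^HAX=\Lambda$ and $X^HBX=J=\diag(\pm1)$. Let $x_\pm$ be the columns of $X$ for $\lambda_1^\pm$, so $Ax_\pm=\lambda_1^\pm Bx_\pm$. Set $\mu=(\lambda_1^-+\lambda_1^+)/2$. I want to perturb $A$ so that, in the eigenbasis, both eigenvalues move to $\mu$. In the eigen-coordinates this means $\tilde\Lambda=\Lambda+\delta_+ e_+e_+^H-\delta_- e_-e_-^H$ with $\delta_\pm=\pm(\mu-\lambda_1^\pm)$, and $|\delta_\pm|=(\lambda_1^+-\lambda_1^-)/2<\mathsf{tol}/2$. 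Here $e_\pm$ are the coordinate vectors whose diagonal entries of $\Lambda$ are $\lambda_1^\pm$. Pulling back gives $\Delta A=X^{-H}(\tilde\Lambda-\Lambda)X^{-1}$, and $\|\Delta A\|_2\le\|X^{-1}\|_2^2\cdot\frac{\mathsf{tol}}{2}$, which is the claimed bound.

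The perturbed pair $(A+\Delta A,B)$ then has the $B$-positive eigenvalue $\mu$ and the $B$-negative eigenvalue $\mu$ together. By Lemma~3.8 of~\cite{LiBai2013}, equivalently Lemma~\ref{re: B ind}-type reasoning, $A+\Delta A-\lambda B$ fails to be definite for every $\lambda$. Concretely, $x=x_++x_-$ satisfies $x^H(A+\Delta A-\mu B)x=0$ and $x^HBx=1-1=0$, so $x^H(A+\Delta A)x=0$ as well. The sufficient condition then gives indefiniteness. The hypothesis $\|X^{-1}\|_2^2\ll\mathsf{tol}^{-1}$ ensures the resulting distance is small.

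The main obstacle is deciding which $X$ to use. The distance bound depends on the choice of diagonalizing $X$, so I would take $X$ as in~\cite{LiBai2013}, with $B$-normalized eigenvectors, and accept that the bound is only as good as $\|X^{-1}\|_2$, which is exactly why the theorem carries that assumption.
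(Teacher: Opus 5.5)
For b) your plan is the paper's. Interlacing gives $\mathrm{len}=\lambda_1^+-\lambda_1^-<\mathsf{tol}$; you perturb $A$ in the congruence coordinates of $X$ so that $\lambda_1^\pm$ merge at the midpoint; and the pull-back costs at most $\frac{\mathrm{len}}{2}\|X^{-1}\|_2^2$. The differences are in how you certify indefiniteness:
\begin{itemize}
\item[a)] The paper cites $\gamma(A,B)=d(A,B)$ and uses $\gamma(A,B)\le\sqrt{(u^HAu)^2+(u^HBu)^2}$. Your rank-one perturbation $(-a\,uu^H,-b\,uu^H)$ makes the easy inequality $d\le\gamma$ explicit, and it is correct and self-contained.
\item[b)] The paper shows $(A+\Delta A,B)$ is indefinite by proving $\{\lambda: A+\Delta A-\lambda B\succeq 0\}=\{\mu\}$, since $\mu$ becomes a multiple eigenvalue of mixed type (Li and Bai, Lemma~3.8). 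Your nonzero vector $x=x_++x_-$ with $x^H(A+\Delta A)x=x^HBx=0$ is a more elementary certificate that avoids that lemma.
\end{itemize}

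However, there is a sign error that makes your perturbation fail as written. Since $X^HBX=J$ has the entry $-1$ at the position of $x_-$, the corresponding diagonal entry of $\Lambda=X^HAX$ is $x_-^HAx_-=\lambda_1^-\,x_-^HBx_-=-\lambda_1^-$, not $\lambda_1^-$. To make $\mu$ a $B$-negative eigenvalue you must replace $-\lambda_1^-$ by $-\mu$, i.e.\ add $\lambda_1^--\mu=-\mathrm{len}/2$. Your term $-\delta_-e_-e_-^H$ adds $\mu-\lambda_1^-=+\mathrm{len}/2$ instead, which moves that eigenvalue to $\lambda_1^--\mathrm{len}/2$, away from $\mu$. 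The perturbed pair is then still positive definite, with definiteness interval containing $(\lambda_1^-,\mu)$. Indeed, $x^H(A+\Delta A)x=\mathrm{len}\neq 0$ for $x=x_++x_-$.

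The correct choice is $\tilde\Lambda=\Lambda-\frac{\mathrm{len}}{2}(e_+e_+^H+e_-e_-^H)$, which is the paper's $\hat D$. It has the same norm $\mathrm{len}/2$, so your bound and your $x_++x_-$ argument go through once this is fixed.

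A minor point: writing $J=\diag(\pm1)$ assumes $B$ is nonsingular. In general, use $X^HBX=\diag(I_{n_+},-I_{n_-},0_{n_0})$ with an $I_{n_0}$ block in $X^HAX$, as the paper does. Your perturbation never touches that block, so nothing else changes.
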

 \begin{proof}
If the given matrix pair is deemed to be indefinite or definite but close to an indefinite pair on Lines~\ref{line2}-\ref{line3} (that is, Line~\ref{line:suf2}), we justify the decision as follows. 
The Crawford number $\gamma(A,B)$~\eqref{eq: Crawford}
equals the distance $d(A,B)$ from a Hermitian  $\AB$ (definite or indefinite) to the nearest indefinite pair~\cite[Theorem 2.2]{htvd2002}, which is defined by
\begin{equation}\label{eq: Craw dist}
    d(A,B)=\min\{\|[\Delta A\,\,\Delta B]\|_2: z^H(A+\Delta A+i(B+\Delta B))z=0, \,\text{some}\, z\neq 0\}.
\end{equation}
Suppose  $(A,B)$ is actually definite and that there exists $u\in\mathbb{C}^n$ such that $\|u\|_2=1$, $u^H Au\approx 0$ and $u^H Bu\approx 0$, that is
\begin{equation*}\label{eq: close ind}
    \sqrt{(u^H Au)^2+(u^H Bu)^2}\leq \mathsf{tol_{ind}}.
\end{equation*}
Therefore,
 $d(A,B)=\gamma(A,B)\leq \mathsf{tol_{ind}}$,
 which indicates that $(A,B)$ lies within a distance $\mathsf{tol_{ind}}$ of an indefinite pair.

 If the given matrix pair is deemed to be indefinite or definite but close to an indefinite pair on Lines~\ref{line23}-\ref{line24}, we justify the decision as follows. 
 Let $(A,B)$ be a positive definite pair with indefinite $B$ such that $\In(B)=(n_+,n_-,n_0)$. Corollary~\ref{cor: interlacing} implies $\lambda_1^+-\lambda_1^-\leq \nu_1^+-\nu_1^-$,
where $\lambda_1^\pm$ and $\nu_1^\pm$ are eigenvalues of $(A,B)$ and $(U^HAU,U^HBU)$, respectively, for any $U$ with full column rank such that $U^HBU$ is indefinite. 
When  a determination of  Algorithm~\ref{alg:det def} is made by finding some positive definite projected pair $(U^H_{i}AU_i, U^H_{i}BU_i)$ such that
$\nu_{i+1;1}^+-\nu_{i+1;1}^{-}<\mathsf{tol}$, this implies that
\begin{equation}\label{eq: len}
    \mathrm{len}:=\lambda_1^+-\lambda_1^-<\mathsf{tol}.
\end{equation}
Recall that there exists a nonsingular matrix $X$ such that (e.g,~\cite{LiBai2013}[Lemma~3.8, item~1.])
\begin{equation}\label{eq: X diag}
 X^HAX=\mathrm{diag}({\Lambda_+,-\Lambda_-},I_{n_0})=:D,\quad X^HBX=\mathrm{diag}(I_{n_+},-I_{n_-},0_{n_0})=:J,   
\end{equation}
are both diagonal, where $\Lambda_+=\mathrm{diag}(\lambda_{n_+}^+,\ldots,\lambda_1^+)$   and $\Lambda_-=\mathrm{diag}(\lambda_{1}^-,\ldots,\lambda_{n_-}^-)$, with $\lambda_{i}^\pm$ as in~\eqref{eq:eigenvalueordering}. The representations in~\eqref{eq: X diag} are uniquely determined by $(A,B)$, up to a simultaneous permutation
of the corresponding diagonal elements. Let $\lambda_0=(\lambda_1^++\lambda_1^-)/2$, $\Delta B=0$ and $\Delta A$ be a Hermitian perturbation  of the matrix $A$ such that
$$\begin{array}{rl}
&X^H(A+\Delta A)X\\
&=\mathrm{diag}(\lambda_{n_+}^+,\ldots,\lambda_2^+,\lambda_1^+-\mathrm{len}/2,-(\lambda_{1}^-+\mathrm{len}/2),-\lambda_2^-,\,\ldots,-\lambda_{n_-}^-,1\ldots,1)\\
&=\mathrm{diag}(\lambda_{n_+}^+,\ldots,\lambda_2^+,\lambda_0,-\lambda_0,-\lambda_2^-,\ldots,-\lambda_{n_-}^-,1\ldots,1)=: \hat{D}.
\end{array}
$$
The diagonal matrix $\hat{D}-\lambda_0J$ has non-negative eigenvalues, therefore, it is a positive semidefinite matrix. Consequently, the matrix $A+\Delta A-\lambda_0 B$ is also positive semidefinite because congruence preserves semidefiniteness.
 Therefore, the Hermitian perturbed pair $(A+\Delta A, B)$ is a  positive semidefinite\footnote{Let $A,B\in \Cnn$. 
  A Hermitian  matrix pair $(A,B)$ is called \emph{positive} \emph{(negative)} \emph{semidefinite} if there exists a real $\lambda_0$ such that the matrix $A-\lambda_0 B$ is positive (negative) semidefinite. We have the following eigenvalue ordering for the finite eigenvalues of a positive semidefinite $(A,B)$: $\lambda_{n_-}^{-} \leq\dotsb \leq \lambda_{1}^{-} \leq \lambda_{1}^{+} \leq
\dotsb \leq \lambda_{n_+}^{+}$, where  $\In(B)=(n_+,n_-,n_0)$~\cite{Lancaster2005,LiBai2013}. Also, $\{\lambda\in\R: A-\lambda B\succeq 0\}=[\lambda_1^-,\lambda_1^+]$.}  matrix pair such that $\{\lambda\in\R: A+\Delta A-\lambda B\succeq 0\}=\{\lambda_0\}$ is a singleton since $\lambda_0$ is a multiple eigenvalue that is of mixed type (that is, it has a $B$-positive and a $B$-negative eigenvector) (\cite{LiBai2013}[Lemma~3.8, items~2.-3.]). 
Therefore, $(A+\Delta A, B)$ is a Hermitian indefinite pair, as no open definiteness interval exists. Since $(A+\Delta A, B)$ is indefinite, there exists a unit vector~$u$ such that $u^H(A+\Delta A+iB)u=0$ by~\eqref{eq: Crawford}.
So, for our choice of the Hermitian perturbation $$(\Delta A,\Delta B)=(X^{-H}(\hat{D}-D)X^{-1}, 0),$$ by~\eqref{eq: Craw dist} and~\eqref{eq: len} we have 
\begin{equation*} \label{eq: gamma tol}
\begin{split}
    \gamma(A,B)&\leq \|[\Delta A\,\,\Delta B]\|_2\leq \|X^{-H}\|_2 \|\hat{D}-D\|_2  \|X^{-1}\|_2\\
    &=\frac{\mathrm{len}}{2} \|X^{-1}\|_2^2<\frac{\mathsf{tol}}{2}\|X^{-1}\|_2^2.
\end{split}
\end{equation*}
Therefore, our definite pair $(A,B)$ is within the distance $\frac{\mathsf{tol}}{2}\|X^{-1}\|_2^2$ of an indefinite pair. 
If  $\|X^{-1}\|_2^2$ is not too large ($\|X^{-1}\|_2^2\ll \mathsf{tol}^{-1}$), and $\mathsf{tol}$ is approximately the unit roundoff, then $(A,B)$ is close to an indefinite pair.

We conclude that Algorithm~\ref{alg:det def} can wrongly diagnose indefiniteness only when a definite $(A,B)$ is close to an indefinite pair.
 \end{proof}

\section{Numerical experiments}\label{sec:exper}

In this section, we give several numerical experiments to demonstrate the behavior of Algorithm~\ref{alg:det def}, with and without preconditioners~\cite{MMPcode}.
For forming exact preconditioners, we use the MATLAB backslash operator to solve linear systems. For forming inexact preconditioners we use GMRES with a maximum of 15 iterations. The results are presented for Algorithm~\ref{alg:det def} as described in section \ref{sec:det def subspaces} without the computation of a direction of negative curvature $z$ from \eqref{eq: curv} (the overall number of iterations is similar, but it is more time consuming when $z$ is computed). 
We use tolerance $\mathsf{tol}= 10^{-12}$ in Line~\ref{line23} of Algorithm~\ref{alg:det def}   and $\mathsf{tol_{ind}}= 10^{-4}$ in Line~\ref{line:suf2} of Remark~\ref{rem: subspace alg}~(b) (unless otherwise stated). 
All experiments have been performed in MATLAB R2020a on Processor	Intel(R) Core(TM) i7-1065G7 CPU  1.30GHz,  4 Core(s), 8 Logical Processor(s), 16 GB RAM.

 We compare our algorithm with the improved arc algorithm~\cite[Algorithm~2.3]{Tisseur2010} since it uses the attempted Cholesky factorization in each iteration.  
 For Cholesky factorization in the arc algorithm, we use a handwritten function for dense matrices (different types of pivoting can be used) and the MATLAB function $\mathtt{chol}$ for sparse matrices. For dense matrices, we run the arc algorithm   without pivoting and with complete pivoting (in which at each stage the pivot is chosen as the largest diagonal element in the active part of the matrix). We use $[1,1,\ldots,1]^T/\sqrt{n}$ as an initial vector in the arc algorithm.

We also compare our algorithm with the algorithms from ~\cite{Ding2018,PandurKuzmano}.
 For computing the smallest eigenvalue, and its corresponding eigenvector in~\cite[Algorithm~4.2 and 4.3]{PandurKuzmano},  we use   LOBPCG~\cite{Knyazev2001},\footnote{MATLAB code available from \url{https://mathworks.com/matlabcentral/fileexchange/48-lobpcg-m}.} with GMRES preconditioners~\footnote{For efficiency of~\cite[Algorithm~4.1]{PandurKuzmano} we use the eigenvectors from the last iteration to initialize each call to LOBPCG. The preconditioner is of the form  $T_{k+1}=(A \cos\theta_{k+1}+B\sin\theta_{k+1}-c_{k+1}I)^{-1}$, where $c_{k+1}=\lambda_{min}(V_k^H(A\cos\theta_{k+1}+B\sin\theta_{k+1}) V_k)$, available at the $k-$th iteration of~\cite[Algorithm~4.2]{PandurKuzmano}, where $V_k$ is the basis matrix of the current subspace.} and the block size of two, or MATLAB function $\mathsf{eigs}$.  LOBPCG and   $\mathsf{eigs}$, are stopped when the maximum number of 100 iterations is achieved or the norm of the residual is less than $10^{-8}$. 
We use the block size of two,  GMRES preconditioners and stop ~\cite[Algorithm~5.1]{Ding2018} when the norm of the residual is smaller than $\mathsf{tol}=10^{-8}$. Additionally, we stop the algorithms in~\cite{Ding2018,PandurKuzmano} when the upper bound for the Crawford number is less than $10^{-4}$ indicating that the original matrix pair is close to an indefinite one. 

\begin{example}\label{exm:ispitivanje clement} (\textbf{Tridiagonal and diagonal matrices})
{\rm 
In this experiment we consider the indefinite pairs $(H,J_r)$ of order $n=500$,
where $H$ is a tridiagonal real symmetric matrix   obtained  using the MATLAB code:
\begin{verbatim}
    H=gallery('clement',n,1);
    Hmax=max(max(abs(H)));
    H=H./Hmax;
\end{verbatim}
and $J_r:=\diag(I_{n-r},-I_{r}),\,r=10,400$.
 We present the results of applying the arc algorithm~\cite[Algorithm~2.3]{Tisseur2010} and Algorithm~\ref{alg:det def}  with $m=2$,  with and without the preconditioners, and two initial matrices:
 \begin{center}
$\begin{array}{cl}
U_{0;1} & =[I_{500}(:,1),-I_{500}(:,500)],\\
U_{0;2} & =[[1,\ldots,1,\underbrace{0,\ldots,0}_{r\,\textrm{times}}]^T,[0,\ldots,0,\underbrace{-1,\ldots,-1}_{r\,\textrm{times}}]^T].
\end{array}$
\end{center}
The results are given in Table~\ref{tab:ispitivanje clement indef}. When the first projected pair is indefinite, noted by ``Stage: --'', there was no attempted Cholesky factorization. Both algorithms quickly detect the indefiniteness of $(H,J_r)$. Algorithm~\ref{alg:det def} has very low computational cost in this example. We use a symmetric indefinite decomposition  $GJG^T$ from~\cite[Section 2]{Slapnicar1998}, but with pivoting strategy designed for tridiagonal matrices from~\cite{BunchMarcia2006} in our algorithm to form an exact preconditioned residual.

\begin{table}[ht]
\footnotesize
\caption{Comparison of the arc algorithm~\cite[Algorithm~2.3]{Tisseur2010} and Algorithm~\ref{alg:det def} with~$m=2$ with and without preconditioners  in detecting indefinite pairs from  Example~\ref{exm:ispitivanje clement}. Stage denotes on which diagonal element Cholesky factorization without pivoting terminates. For Algorithm~\ref{alg:det def} and fixed $r$  upper results are for $U_{0;1}$ and lower results are for $U_ {0;2}$.}
\label{tab:ispitivanje clement indef}
\begin{center}
\begin{tabular}{@{}l|l|l|l@{}}
\toprule
 & \multirow{2}{*}{\cite[Alg.~2.3]{Tisseur2010}} & \multicolumn{2}{c}{Alg.~\ref{alg:det def} with $m=2$}\\
 & & without preconditioners & with preconditioners \\
  \midrule
 \multirow{4}{*}{$r=10$} & Stage: & Stage: -- & Stage: --\\
                          & 18, 491; &  1. projected pair is indefinite  & 1. projected pair is indefinite\\
                           \cline{3-4}
                          & arc length $>\pi$& Stage: 1, 1; & Stage: 1, 1, 3, 5, 6;\\
                          &  & 3. projected pair is indefinite & 6. projected pair is indefinite\\
                            \hline
 \multirow{4}{*}{$r=400$} & Stage: & Stage: -- & Stage: --\\
                          & 1, 21,67,101; & 1. projected pair is indefinite & 1. projected pair is indefinite\\
                          \cline{3-4}
                          &arc length $>\pi$  & Stage:  6, 3; & Stage: 6, 1, 1;\\
                          &  & 3. projected pair is indefinite & 4. projected pair is indefinite\\
\bottomrule
\end{tabular}
\end{center}
\end{table}
}
\end{example}

\begin{example}\label{ex: gen_hyper2} (\textbf{Quadratic eigenvalue problem})
The quadratic eigenvalue pro\-blem (QEP)
\begin{equation}\label{eq:qep}
\mathbf{Q}(\lambda)x:=(\lambda^2M+\lambda D+K)x=0,
\end{equation}
where $\lambda\in\C$ is called an eigenvalue, and $x\in\C^n$ is called an eigenvector, is said to be hyperbolic if  $M,D,K\in\C^{n\times n}$ are Hermitian, $M$ is positive definite, as well as 
$(z^HDz)^2-4(z^HMz)(z^HKz)>0$ for all nonzero $z\in \Cn$. 
 It is well known~\cite{htvd2002,Veselic1993} that hyperbolicity of the  QEP in~\eqref{eq:qep} is equivalent to the  definiteness of the matrix pair $(A,B)$ with
 \begin{equation}\label{eq:pair A B}
A=\begin{bmatrix}
M & 0\\
0 & -K
\end{bmatrix},\quad
B=\begin{bmatrix}
0 & M\\
M & D
\end{bmatrix}.
\end{equation}
They have the same eigenvalues, and the eigenvectors are easily connected.



We construct a set of 30 hyperbolic quadratics of order $n=500$ using the MATLAB function $\mathsf{gen\_hyper2}$ in the collection NLEVP~\cite{nlevp} with  the following  eigenvalue distribution (the other parameters are chosen randomly):
\begin{itemize}
  \item [] \textbf{type 1}: $\lambda_k^{\pm}$, $k=1,\ldots,n$, is uniformly distributed in $[-500,-250]\cup[-240,-1]$. The mean value of the  lengths of the computed definiteness intervals is $10.5$.  
\end{itemize}
Similarly, we construct a set of $20$ hyperbolic quadratics of order $n=500$ with  the following  eigenvalue distribution, as in~\cite{Voss2010}:
\begin{itemize}
  \item [] \textbf{type 2}: $\lambda_k^{\pm}$, $k=1,\ldots,n$, is uniformly distributed in $[-100,-1]$,  and the length of the definiteness interval is $10^{-j}$ for $j=1,\ldots, 20$. So, the last definiteness interval is in the theory of length $10^{-20}$, but in floating point arithmetic with $\varepsilon=2^{-53}\approx 1.1\times 10^{-16}$, the unit roundoff, we have no definiteness interval: since $\lambda_1^-=\lambda_1^+$. Therefore, for $j=15,\ldots,20$, we have positive semidefinite matrix pairs with no definiteness intervals.
  \end{itemize}
The corresponding matrices in the constructed quadratics are real and dense. We run Algorithm~\ref{alg:det def} with $m=2$ and with $m=3$  on the matrix pair from~\eqref{eq:pair A B} of order $2n=1000$.\footnote{We emphasize that there exist algorithms for detecting a hyperbolic QEP that work directly with matrices $M$, $D$, $K$, which reduces storage requirements and the cost per iteration, such as~\cite{Tisseur2010,Voss2010,MPandur2020}. 
Here we just want to exploit the behavior of Algorithm~\ref{alg:det def} on the linearization matrix pair which has a special structure.} An initial matrix $U_0$ is chosen such that  $k_+$ $B$-positive columns are from $[0_n;I_n]$ and $k_-$ $B$-negative columns are from $[M^{-1}D;-I_n]$, since $D$ is positive definite. We use different dimensions $p$ of the subspaces for $m=3$: for $k_\pm=1$ we have $p=6$, for $k_\pm=2$ we have $p=12$, and for $k_\pm=3$ we have $p=18$. We run four versions of Algorithm~\ref{alg:det def} with $m=3$:
\begin{itemize}
\item[(a)] without preconditioning;
    \item[(b)] with exact preconditioning in every fifth iteration, in other iterations we do not use preconditioning;
    \item[(c)]  with exact preconditioning in every  iteration;
     \item[(d)] with inexact preconditioners in every  iteration.
\end{itemize}
Similarly, for $m=2$ we have $p=4,8,12$ and four versions depending on the choice of a preconditioning.

\begin{table}[ht]
\footnotesize
\caption{The results for  detecting 30 definite pairs from~\eqref{eq:pair A B} of order $2n=1000$ from  Example~\ref{ex: gen_hyper2} for \textbf{type 1}. The minimum, average and maximum number of iterations, and the average CPU time are given for all algorithms.}
\label{tab:ex: gen_hyper2 type 1 n=500}
\begin{center}
\begin{tabular}{@{}l|rrrr|rrrr@{}}\toprule
Algorithm & $\min$ & $\mathrm{mean}$ & $\max$ & CPU time &  $\min$ & $\mathrm{mean}$ & $\max$ & CPU time\\
\midrule
 & \multicolumn{4}{c|}{without precond.} & \multicolumn{4}{c}{exact precond. in every 5th iter. }\\
Alg.~\ref{alg:det def}, $m=2$  & 1 & 13 & $16$ & $0.20$ &    1 & 11 & 16 & $0.22$ \\
Alg.~\ref{alg:det def},  $m=3$ & 1 & 4 & $4$ & $0.07$  &  1 & 4 & 4 & $0.07$ \\
\midrule
 & \multicolumn{4}{c|}{exact precond.} & \multicolumn{4}{c}{GMRES precond.}\\
Alg.~\ref{alg:det def}, $m=2$  & 1 & 4 & 5 & $0.16$ &  1 & 5 & $14$ & $0.69$\\
Alg.~\ref{alg:det def}, $m=3$  & 1 & 4 & 5 & $0.15$  &  1 & 4 & 12 & $0.48$ \\
\midrule
 \multirow{2}{*}{\cite[Alg.~2.3]{Tisseur2010}}& \multicolumn{4}{c|}{without pivoting} & \multicolumn{4}{c}{with complete pivoting}\\
 & 6 & 6 & 7 & $9.54$ &  4 &6 & 7 & $12.08$ \\ \midrule 
 \cite[Alg.~5.1]{Ding2018}  & 4 & 11 & 21 & $0.45$  &&&&\\ \midrule 
 & \multicolumn{4}{c|}{LOBPCG} & \multicolumn{4}{c}{$\mathsf{eigs}$}\\ 
  \cite[Alg.~4.2]{PandurKuzmano} & 2 & 2 & 3 & $11.67$  &2&2&3& $8.93$\\ 
    \cite[Alg.~4.3]{PandurKuzmano} & 2 & 2 & 2 & $11.19$  &2&2&3& $8.85$\\
 \bottomrule
\end{tabular}
\end{center}
\end{table}

 When $\mathsf{tol_{ind}}=10^{-4}$, the algorithm ends immediately in Line~\ref{line2} for an initial matrix $U_0$ declaring  in all cases,  that the given matrix pair is close to an indefinite matrix pair. Therefore, to have a better insight into the performance of Algorithm~\ref{alg:det def}, we use smaller tolerance $\mathsf{tol_{ind}}=10^{-12}$ and present the results when $B$-orthonormalization is used in Line~\ref{line6}. The results for \textbf{type 1} are given in Table~\ref{tab:ex: gen_hyper2 type 1 n=500} only for $k_\pm=1$ (the results for other $k_\pm$ are very similar and are omitted).  In all cases  Algorithm~\ref{alg:det def}  detects the definiteness in less than 17 iterations. Algorithm~\ref{alg:det def} with $m=3$  is more efficient than with $m=2$, especially without preconditioning.

 The results  for running~\cite[Algorithm~2.3]{Tisseur2010}   are given in the middle of Table~\ref{tab:ex: gen_hyper2 type 1 n=500}. The algorithm with complete pivoting achieves better performance. Although the overall number of iterations of the arc algorithm is comparable to that of Algorithm~\ref{alg:det def} with exact preconditioners, the arc algorithm requires more computational time.

 The results for running~\cite[Algorithm~5.1]{Ding2018} and \cite[Algorithm~4.2, Algorithm~4.3]{PandurKuzmano} are at the  bottom of Table~\ref{tab:ex: gen_hyper2 type 1 n=500}. In all cases, these algorithms stop when the approximation of the Crawford number, which is of order $10^{-6}$ in all cases, is less than $10^{-4}$. The algorithms presented in~\cite{PandurKuzmano} are the most time-consuming ones, as in certain cases they require the full limit of 100 steps to compute the eigenpair.


\begin{table}[ht]
\footnotesize
\caption{The results for  detecting 14 definite pairs and 6 indefinite pairs from~\eqref{eq:pair A B} of order $2n=1000$ from  Example~\ref{ex: gen_hyper2} for \textbf{type 2}. The minimum, average and maximum number of iterations, and the average CPU time are given for all algorithms. The results are separately given for~\cite[Algorithm~2.3]{Tisseur2010} without pivoting (noted by ``\textit{no piv.}'') and with complete pivoting (noted by ``\textit{compl. piv.}'') in the attempted Cholesky factorizations. 
}
\label{tab:ex: gen_hyper2 type small n=500}
\begin{center}
\begin{tabular}{@{}l|r|rrrrr@{}}
\toprule
Algorithm &$p$ & $\min$ & $\mathrm{mean}$ & $\max$ & CPU time  \\
\midrule
\multirow{3}{*}{Alg.~\ref{alg:det def}, $m=2$} &4  & 7 & 10 & 14 & $0.42$  \\
& 8  & 5 & 8 & 10 & $0.37$  \\
& 12  & 6 & 7 & 9 & $0.34$  \\
\midrule
\multirow{3}{*}{Alg.~\ref{alg:det def}, $m=3$} &6  & 7 & 9 & 14 & $0.41$ \\
& 12  & 6 & 8 & 9 & $0.37$  \\
& 18  & 6 & 7 & 8 & $0.48$  \\
\midrule
\multirow{2}{*}{\cite[Alg.~2.3]{Tisseur2010}}&  no piv.& 9 & 21 & 25 & $69.35$  \\
& compl. piv.& 11 & 18 & 21& $65.08$  \\ \hline
 \cite[Alg.~5.1]{Ding2018} & & 8 & 30 & 151 & $1.53$ \\ \midrule
 \cite[Alg.~4.2 and~4.3]{PandurKuzmano}& & 2 & 3 & 4 & $13.72$\\
\bottomrule
\end{tabular}
\end{center}
\end{table}

 For \textbf{type 2} we run Algorithm~\ref{alg:det def} with $m=2$ and $m=3$ and only with exact preconditioners in every iteration: the results  are given in Table~\ref{tab:ex: gen_hyper2 type small n=500}.  We observe that increasing the subspace dimension $p$ may decrease the overall number of iterations, but does not necessarily reduce the execution time of Algorithm~\ref{alg:det def}. We describe in detail the results for $m=3$ and $p=6$. For the first 12 matrix pairs, the algorithm correctly detects the definiteness. 
 For $j=13$ and $j=14$ the algorithm finds some projected pair that is not definite and ends with the conclusion that the original matrix pair is not definite, although it is definite with the length of  definiteness interval of $10^{-13}$ and $10^{-14}$, respectively. The Crawford number for $j=13$ and $j=14$ is approximately $4.45\cdot 10^{-12}$ and $1.86\cdot 10^{-8}$, respectively.  
For $j=15,\ldots,19$ the algorithm correctly detects that the original pair is indefinite by finding some indefinite projected pair. For $j=20$, the algorithm correctly detects in the seventh iteration that the original matrix pair is close to an indefinite one, since the length of the definiteness interval of the last found projected pair is less than $\mathsf{tol}=10^{-12}$. 

 We also run the arc algorithm~\cite[Algorithm~2.3]{Tisseur2010} for \textbf{type 2} for which the   results are given in the middle of Table~\ref{tab:ex: gen_hyper2 type small n=500}. The arc algorithm in this example is very time-consuming. We describe in detail the results with complete pivoting. The arc algorithm detects that the pair is definite for $j=1,\ldots,9$.   For $j\in\{10,\ldots,19\}\setminus\{14,15\}$  detects that the pair is close to indefinite one. For $j=14$, it incorrectly declares that the pair is indefinite but correctly declares that the pair is indefinite for $j=15$ and $j=20$.

 Finally,  the results for running~\cite[Algorithm~5.1]{Ding2018} and \cite[Algorithm~4.2, Algorithm~4.3]{PandurKuzmano} with LOBPCG are at the bottom of Table~\ref{tab:ex: gen_hyper2 type 1 n=500}. In all cases, these algorithms stop when the approximation of the Crawford number is less than $10^{-4}$.
\end{example}


\begin{example}\label{exm:ispitivanje sedlasta}(\textbf{Linear systems in saddle point form})
 The matrix of a linear system in saddle point form has the block structure
\begin{equation}\label{eq: saddle}
    \mathsf{A}=\begin{bmatrix}
     A &B^T\\
     B&-C
\end{bmatrix}
\end{equation}
where $A\in\R^{m_1\times m_1}$ is symmetric positive definite, $B\in\R^{m_2\times m_1}$ with $m_2\leq m_1$, and $C\in\R^{m_2\times m_2}$ is symmetric positive semidefinite. The matrix $\mathsf{A}$ is usually large and sparse, and is indefinite. Its indefiniteness tends to slow down  iterative solvers for linear systems. Nevertheless, if a real scalar $\nu$ is known such that $\mathsf{A}-\nu \mathsf{J}$ with $\mathsf{J}=\diag (I_{m_1},-I_{m_2})$, is positive definite or negative definite, then one can construct a conjugate gradient iteration for solving the linear system $\mathsf{J}\mathsf{A}x=\mathsf{J}b$~\cite{Liesen2008}.

\begin{table}[ht]
\footnotesize
\caption{The results for  detecting   (in)definiteness of the pairs $(\mathsf{A}(\alpha),\mathsf{J})$ of order $834$ for several values of $\alpha$ from Example~\ref{exm:ispitivanje sedlasta}.  The number of iterationss and CPU time are given.}
\label{tab:ispitivanje sedlasta indef.parovi}
\begin{center}
\begin{tabular}{@{}l|rr|rr|rr@{}}
\toprule
Algorithm & $\#$ iter & CPU time & $\#$ iter & CPU time & $\#$ iter & CPU time\\
\midrule
 &\multicolumn{2}{c|}{$\alpha=0.1$} & \multicolumn{2}{c|}{$\alpha=0.3$} & \multicolumn{2}{c}{$\alpha=0.7$}\\ \midrule
 Alg.~\ref{alg:det def}, $m=3$  &&&&&&\\
$p=6$  & 1 & $0.018$ & 5 & $0.039$ & 16 & $0.100$\\
 $p=12$ & 0 & $0.004$ & 4 & $0.041$ & 6 & $0.055$\\
 $p=6$, $B$-orth. & 1 & $0.013$ & 5 & $0.028$ & 7 & $0.043$\\
 $p=12$ $B$-orth. &   0 & $0.003$ & 3 & $0.023$ & 13 & $0.134$\\
\midrule
\cite[Alg.~2.3]{Tisseur2010} & 2 & $0.006$ & 7 & $0.010$ & 14 & $0.054$\\
\midrule
 \cite[Alg.~5.1]{Ding2018} & 76 & $0.508$ & 4 (fail) & $0.017$ & 77 & $0.364$\\
 \midrule
 \cite[Alg.~4.2]{PandurKuzmano} & 3 & $0.097$ & 3 & $0.061$ & 4 & $0.110$\\
\midrule\midrule
  &\multicolumn{2}{c|}{$\alpha=0.72544$} & \multicolumn{2}{c|}{$\alpha=0.72545$} & \multicolumn{2}{c}{$\alpha=1$}\\
   \midrule
   Alg.~\ref{alg:det def}, $m=3$  &&&&&&\\
  $p=6$ & 17 & $0.105$ & 14 & $0.086$ & 6 & $0.037$\\
  $p=12$ &   11 & $0.086$ & 13 & $0.085$ & 6 & $0.045$\\
 $p=6$, $B$-orth. & 18 & $0.099$ & 16 & $0.081$ & 5 & $0.032$\\
 $p=12$ $B$-orth. &   11 & $0.095$ & 12 & $0.088$ & 5 & $0.037$\\
\midrule
\cite[Alg.~2.3]{Tisseur2010}& 18 & $0.110$ & 19 & $0.079$ & 5 & $0.019$\\
\midrule
 \cite[Alg.~5.1]{Ding2018} & 95 (fail) & $0.506$ & 501 (fail) & $2.996$ & 25 (fail) & $0.118$\\
 \midrule
\cite[Alg.~4.2]{PandurKuzmano} & 5 & $0.218$ & 5 & $0.239$ & 4 & $0.122$\\
\bottomrule
\end{tabular}
\end{center}
\end{table}

 In this experiment we consider a saddle point system with a matrix $\mathsf{A}$ of the form~\eqref{eq: saddle} generated by the MATLAB
 package Incompressible Flow Iterative Solution Software (IFFIS)~\cite{ers07,ers14}, version 2.2.\footnote{The latest versions are available from \url{https://personalpages.manchester.ac.uk/staff/david.silvester/ifiss/} } We run the MATLAB script file $\mathtt{stokes\_testproblem}$ to construct sparse matrices $A$, $B$ and $C$, as in~\cite[Experiment 6]{Tisseur2010}. We use the default options, that set up a stabilized discretization  of  a Stokes equation model problem. The matrix  $A$ is of order $m_1=578$ and  $C$ is of order $m_2=256$. 
To have more than one test, we consider the scaled matrix
$$
\mathsf{A}(\alpha)=\begin{bmatrix}
\alpha^2 A & \alpha B^T\\
\alpha B & -C
\end{bmatrix}
$$
for some real  $\alpha>0$.

Table~\ref{tab:ispitivanje sedlasta indef.parovi} reports the number of iterations and the CPU time for the performance of Algorithm~\ref{alg:det def} with $m=3$ and the exact preconditioner in each iteration, the arc algorithm using the MATLAB function $\mathsf{chol}$ for sparse matrices, \cite[Algorithm~5.1]{Ding2018}, and \cite[Algorithm~4.2]{PandurKuzmano} using $\mathsf{eigs}$, applied to $(\mathsf{A}(\alpha),\mathsf{J})$ for several values of $\alpha$.  For    $\alpha\leq 0.72544$ we have indefinite pairs, and  for $\alpha\geq 0.72545$ we have definite pairs.

 The results for our algorithm with two types of orthonormalization are very similar for a fixed $\alpha$ and the same $p$ (except for $\alpha=0.7$.) Our algorithm and the arc algorithm~\cite[Algorithm~2.3]{Tisseur2010} correctly detect the (in)definiteness of the given pairs.  In all cases except for $\alpha=1$,  the enlargement of the subspaces ($p=12$ instead of $p=6$),  gives a smaller number of the attempted Cholesky factorizations in Algorithm~\ref{alg:det def}  with the standard orthonormalization. Notice that the first projected pair for $\alpha=0.1$, $m=3$ and $p=12$ is indefinite and therefore no attempted  Cholesky 
factorizations  is performed. Our algorithm and the arc algorithm have similar number of iterations and CPU time for a fixed value of $\alpha$. 

 For indefinite matrix pairs with $\alpha=0.1, 0.7$, \cite[Algorithm~5.1]{Ding2018} stops when the approximation of the Crawford number is less than $10^{-4}$. For all other cases, it stops when  the eigenvalue approximations lose monotonicity, although the norm of the current residual is not small (it is noted by ``(fail)'' in Table~\ref{tab:ispitivanje sedlasta indef.parovi}), meaning that the Crawford number is not computed to the desired precision.

The results for~\cite[Algorithm~4.2 and 4.3]{PandurKuzmano} using  $\mathsf{eigs}$ are very similar, therefore are given only for~\cite[Algorithm~4.2]{PandurKuzmano}. The indefiniteness for $\alpha=0.1,0.3,0.7$ is correctly detected by~\cite[Algorithm~4.2]{PandurKuzmano} by finding some indefinite projected pair. For $\alpha=0.72544,0.72545$, the algorithm ends when the upper bound for the Crawford number is less than $10^{-4}$. For $\alpha=1$, the algorithm detects the definiteness  by computing the smallest eigenvalue which is positive. 



  \end{example}

\begin{example}\label{exp: banded} (\textbf{Large banded})
    We now test with larger problem size $n=20000$. Matrices $A,B$ are banded with the bandwidth $bw=50$. We form indefinite matrices $A,B$  with the following MATLAB code:
\begin{verbatim}
    A=gallery('lehmer',n);
    d=-bw:bw;
    Aout=spdiags(A,d);
    A=spdiags(Aout,d,n,n);
    nBl=n-(2*bw+1);
    B=spdiags(sprand(nBl,nBl,1),d,nBl,nBl);
    B=blkdiag(diag(d),B); 
    B=(B+B')/2;
\end{verbatim}
 We consider one definite pair of the form $(A-(\lambda_{\min}(A)-1)I_n,B)$, and one indefinite pair $(A,B)$.  We apply Algorithm~\ref{alg:det def} with $m=3$ and $p=6$ ($k_\pm=1$), and three versions of preconditioning: without preconditioning, with exact preconditioning, and with GMRES preconditioning. 
 The results are given in Table~\ref{tab:large banded}. The total number of the iteration is the same when using the standard orthonormalization and when using $B$-orthonormalization. For the latter, the time of execution is given in the brackets.  The (in)definiteness is detected very quickly with all three versions of preconditioning. 
 
 We repeat the experiments with $m=3$, $p=12,18$ and the standard orthonormalization. For the definite pair, the number of the iteration is similar as for $p=6$, but with larger CPU time. For the indefinite pair, the number of the iteration is one in all cases,  with similar CPU time as for $p=6$.  
        \begin{table}[ht]
        \footnotesize
        \caption{Computation results for Example~\ref{exp: banded}.}
        \label{tab:large banded}
        \begin{center}
        \begin{tabular}{@{}l|rr|rr@{}}
        \toprule
        \multirow{2}{*}{Algorithm} &  \multicolumn{2}{ |c| }{definite pair}&  \multicolumn{2}{ |c }{indefinite pair}\\
                & $\#$ iter & CPU time & $\#$ iter  & CPU time  \\
             \midrule
             Alg.~\ref{alg:det def} with $m=3$ and $p=6$ && && \\
            no precond. & 3 & $0.380$ ($0.781$) &3  & $0.377$ ($0.367$)\\
             exact precond. & 5 & $0.795$ ($1.065$)& 1& $0.185$ ($0.208$)\\
               GMRES precond. & 4 & $1.803$ ($1.844$)& 3& $1.646$ ($1.492$)\\
            \midrule
          \cite[Alg.~2.3]{Tisseur2010} & 3& $44.508$ & $7$ & $0.911$\\ \midrule
              \cite[Alg.~5.1]{Ding2018} & 251 (fail) & $40.496$ & 5 (fail) & $0.705$\\
         \midrule
               \cite[Alg.~4.2]{PandurKuzmano} & 1 & $4.837$ & 3 & $4.299$\\
            \cite[Alg.~4.3]{PandurKuzmano}  & 1 &  $4.271$ & 3  & $5.466$\\ \bottomrule
        \end{tabular}  
        \end{center}
    \end{table}


 The arc algorithm and the algorithms from~\cite{PandurKuzmano}   correctly detect the (in)definite\-ness.  
Algorithm~5.1 in~\cite{Ding2018} exhibits certain numerical problems. It stops when the eigenvalue approximations lose monotonicity, although the norm of the current residual is not smaller than the specified tolerance $10^{-8}$ (it is noted by ``(fail)'' in Table~\ref{tab:large banded}).  In the definite case, the Crawford number is approximately $3.332$, and the last computed approximation by~\cite[Algorithm~5.1]{Ding2018} has 6 correct digits.  In the indefinite case the Crawford number is 0, while the last computed approximation is $0.012$. 

\end{example}
\section{Conclusion}\label{sec:concl}

This paper  presents  subspace algorithms, basic one and specialized Algorithm~\ref{alg:det def}, for detecting a definite Hermitian matrix pair. Algorithm~\ref{alg:det def} is an interior eigensolver for computing a few eigenvalues around the definiteness interval if the input is a definite matrix pair. Even if the eigenvalues have not been computed to a reasonable accuracy, it terminates earlier when the definiteness is confirmed by successfully completed Cholesky factorization.  It guarantees to report numerical indefiniteness within a finite number of iterations if the input is not definite. Algorithm~\ref{alg:det def} is backward  stable and has multiple criteria for detecting indefiniteness. 

Numerical experiments show that   Algorithm~\ref{alg:det def}  can be more efficient with the parameter $m=3$ than with $m=2$. For $m=2$, the algorithm benefits significantly from the use of preconditioners,  when a given definite pair is close to an indefinite pair. We can use the preconditioned residual, formed by solving a small number of systems of  linear equations, not necessarily in every iteration, and we can solve the associated linear systems only approximately. 

Algorithm~\ref{alg:det def} is intended for medium-sized or large sparse Hermitian matrix pairs, particularly those with banded matrices, since the Cholesky factorization of a banded matrix and the solution of banded linear systems are computationally inexpensive. The algorithm is able to detect (in)definiteness efficiently, often faster than several existing approaches.
Fast detection of defi\-ni\-teness is important in situations where determining a parameter $\lambda$ such that the matrix $A-\lambda B$ is definite is used as a preprocessing step, for example in the development of algorithms for quadratically constrained quadratic programming ~\cite{Adachi2019,Jiang2019,Jiang2018,Taati2019}, or when a family of Hermitian  matrix pairs $(A(\mu),B(\mu))$ are given and (in)definiteness needs to be detected for many values of $\mu$.

\section*{Acknowledgements}
The author gratefully acknowledges Kre\v{s}imir Veseli\'c and Ding Lu for many valuable comments, and Ding Lu and Fran\c coise Tisseur for providing the MATLAB codes. The author also thanks the anonymous reviewers for their valuable remarks and suggestions, which helped to improve the quality of the paper.

This work was supported by the European union-NextGenerationEU, grant no. 581-UNIOS55, OpHoMat.

 \bibliographystyle{elsarticle-harv} 
 \bibliography{mmp.bib}






\end{document}